\documentclass[11pt]{article}
\usepackage{amsthm, amsmath, amssymb, amsfonts, url, booktabs, tikz, setspace, fancyhdr, amsbsy,mathtools}
\usepackage{comment}
\usepackage[margin = 0.8in]{geometry}
\usepackage{hyperref, enumerate}
\usepackage{caption,float}
\usepackage{subcaption}
\usepackage{esint,bm}
\usepackage{verbatim}
\usepackage{multirow}
\usepackage{tcolorbox,xcolor}
\tcbset{width=0.9\textwidth,boxrule=0pt,colback=red,arc=0pt,auto outer arc,left=0pt,right=0pt,boxsep=5pt}

\newtheorem{theorem}{Theorem}[section]  
\newtheorem{assumption}[theorem]{Assumption}

\newtheorem{lemma}[theorem]{Lemma}

\theoremstyle{definition}

\newcommand{\Const}[1]{{\rm C}_{\mathrm{#1}}}

\newcommand{\bbb}{\boldsymbol{b}}

\newcommand{\nnu}{\boldsymbol{\nu}}

\newcommand{\mm}{\boldsymbol{m}}

\newcommand{\xx}{\boldsymbol{x}}
\newcommand{\yy}{\boldsymbol{y}}
\newcommand{\bb}{\boldsymbol{b}}

\numberwithin{equation}{section}

\def\ocirc#1{\ifmmode\setbox0=\hbox{$#1$}\dimen0=\ht0
    \advance\dimen0 by1pt\rlap{\hbox to\wd0{\hss\raise\dimen0
    \hbox{\hskip.2em$\scriptscriptstyle\circ$}\hss}}#1\else
    {\accent"17 #1}\fi}

\begin{document}

\title{Quasi-Monte Carlo finite element approximation for singularly perturbed convection--diffusion problems with random velocity}

\author{ ~Seungchan Ko\thanks{Department of Mathematics, Inha University, Incheon, Republic of Korea. Email: \tt{scko@inha.ac.kr}}, ~Guanglian Li\thanks{Department of Mathematics, The University of Hong Kong, Pokfulam Road, Hong Kong. Email: \tt{lotusli@maths.hku.hk}}
,~and ~Yi Yu\thanks{School of Mathematics and Information Science, Guangxi University, PR China. Email: \tt{yiyu@gxu.edu.cn}}}

\date{~}

\maketitle

~\vspace{-1.5cm}

\begin{abstract}
This paper studies the numerical approximation of a singularly perturbed convection–diffusion problem over a bounded polygonal domain in $\mathbb{R}^d$ ($d=2,3$), where the velocity field is modeled by a log-uniform random field, a setting typical in uncertainty quantification. We introduce a novel numerical framework for computing the expected value of the linear functionals of the solution. The approach combines a finite element discretization of the problem, a truncated Karhunen--Lo\`eve expansion to represent the stochastic velocity field, and a lattice-based quasi-Monte Carlo (QMC) method to estimate expectations over the parameter space. We provide a rigorous error analysis of the proposed scheme, establishing bounds on the mean squared error and demonstrating that the QMC method achieves a nearly linear optimal convergence rate, with a constant independent of the integration dimension. Furthermore, the convergence rate is shown to be independent of the singular perturbation parameter.
\end{abstract}


\noindent{\textbf{Keywords:} Quasi-Monte Carlo method, finite element method, uncertainty quantification, convection--diffusion equation, random velocity, Karhunen--Lo\'eve expansion}

\smallskip

\noindent{\textbf{AMS Classification:} 65D30, 65D32, 65N30, 76D05}

\section{Introduction}
Recent research has increasingly focused on uncertainty in the input data of mathematical models, recognizing its significance in a wide range of scientific and engineering applications. Such uncertainty may arise from various sources, including coefficients, boundary conditions, initial conditions, and external forces. Understanding its impact on key quantities of interest provides valuable insight into the intrinsic variability of physical and engineering systems. Probability theory offers a versatile framework for describing and analyzing uncertainty, where uncertain inputs are modeled as random fields, a particularly effective approach for characterizing the inherent randomness in physical quantities.

In this paper, we consider the singularly perturbed convection--diffusion problem with an incompressible random velocity field. For each random parameter $\yy$, we seek $u(\cdot,\yy)$ such that
\begin{equation*}
\begin{aligned} 
-\varepsilon\Delta u(\xx,\yy)+\bbb(\xx,\yy)\cdot\nabla u(\xx,\yy)&=f(\xx) &&\text{in } D,\\
u(\xx,\yy)&=0 &&\text{on } \partial D,
\end{aligned}
\end{equation*}
where $D\subset\mathbb{R}^d$ ($d=2,3$) is a bounded Lipschitz domain, $0<\varepsilon \ll 1$ is the singular perturbation parameter, and the gradients are taken with respect to the physical variable $\xx\in D$. The velocity field $\bbb(\xx,\yy)$ is modeled by a log-uniform random field satisfying $\nabla\cdot\bbb(\xx,\yy)=0$ for each $\yy$, which introduces uncertainty into the solution $u$. This model describes the transport of particles, heat, energy, or other physical quantities under combined diffusion and convection processes. It can also represent the probability density function of particles undergoing Brownian motion with drift $\bb$. Moreover, we focus on the singularly perturbed case when $\varepsilon\ll 1$, arising in numerous applications, including fluid mechanics, biological transport, and chemical reactions \cite{schlichting2016boundary, batchelor2000introduction}. A common feature is the presence of boundary or interior layers, where the solution varies rapidly over narrow regions while remaining smooth elsewhere. This multiscale behavior is induced by a small perturbation parameter $\varepsilon>0$, often multiplying the highest-order derivative, leading to steep gradients and significant challenges for numerical approximation. Consequently, developing robust and efficient numerical methods for singularly perturbed problems has been an active area of research for decades (e.g., \cite{FEM1, FEM2, FEM3,MR4295126}). Our goal is to efficiently estimate the expectation of the random solution $u(\cdot,\cdot)$ when $\varepsilon\ll 1$; specifically, we aim to approximate $\mathbb{E}[\mathcal{G}(u)]$ for some linear functional $\mathcal{G}\in H^{-1}(D)$ with the convergence rate independent of $\varepsilon$.

To numerically approximate $\mathbb{E}[\mathcal{G}(u)]$, three discretization steps are required. First, the expectation is interpreted as an infinite-dimensional integral with respect to the random variables $\yy=\{y_i\}_{i=1}^\infty$, which arise from a series expansion of the velocity $\bb$. For practical computation, we truncate this infinite series to a finite sum, reducing the problem to a finite but high-dimensional integral. Second, a numerical quadrature rule is applied to approximate this integral. Finally, for each quadrature point, the deterministic singularly perturbed convection--diffusion equation must be solved numerically. Classical Monte Carlo methods achieve a convergence rate of order $1/2$ with respect to the number of samples, which in this context requires solving the deterministic problem a large number of times. To improve efficiency, various methods from uncertainty quantification have been developed, including quasi-Monte Carlo (QMC) methods \cite{qmc_intro_1, qmc_intro_2, main_ref} and sparse polynomial chaos expansions \cite{pce_intro_1, pce_intro_2, cohen2010convergence}. In this work, we focus on QMC integration and investigate approximations using randomly shifted lattice rules \cite{qmc_ref_5, qmc_ref_6, qmc_ref_9}.

Over the past decade, extensive research has been devoted to the design and analysis of QMC methods for linear PDEs with random coefficients. These methods are primarily used to estimate expected values of linear functionals of the exact or approximate solutions (see, e.g., \cite{KSS2012, main_ref, QMC_multi_4, Harbrecht2017, QMC_NS}). The expectations are reformulated as infinite- or high-dimensional integrals over the parameter domain representing the uncertainty. To ensure tractability, the integrand must belong to an appropriately chosen function space. In QMC analysis, the standard setting involves weighted Sobolev spaces on $(0,1)^s$, consisting of functions with square-integrable mixed first derivatives. If the integrand lies in a suitably weighted Sobolev space, randomly shifted lattice rules can be constructed to achieve, or nearly achieve, the optimal convergence rate of $\mathcal{O}(n^{-1})$; see, e.g., \cite{qmc_ref_1, qmc_ref_2, qmc_ref_3, qmc_ref_4, qmc_ref_5} and recent surveys \cite{qmc_ref_6, qmc_ref_7}.

The primary objective of this work is to analyze the convergence of QMC finite element approximations for the singularly perturbed convection–diffusion equation with a random velocity field. The analysis of randomly shifted lattice rules for PDEs is nontrivial, as it requires establishing the regularity of the solution map in a weighted Sobolev space. To the best of our knowledge, no previous work has applied QMC methods to singularly perturbed problems or investigated the interplay between QMC convergence and singular perturbation effects. In this paper, we show for the first time that when randomness is present in the convection term, the mean of the corresponding random solution can be computed via QMC while retaining the standard QMC convergence rate. In particular, we prove that even for very small diffusion coefficients, if an FEM scheme designed for singularly perturbed problems is used, the resulting QMC convergence rate is independent of the diffusion coefficient. We present the convergence of the full approximation scheme in the root mean square sense. For convection–diffusion equations with random data, the multilevel Monte Carlo (MLMC) method has been developed to estimate the expectation of the smallest eigenvalue \cite{MR4741649}. Our main contributions are threefold:
\begin{itemize}
    \item The boundedness of mixed derivatives of $u$ with respect to the random variables $\{y_i\}_{i=1}^s$ up to first order is made explicit in $\varepsilon$ (cf.\ Theorem~\ref{reg_main_thm}), leading to our proposed QMC error estimate (cf.\ Theorem~\ref{error_est_main}). Compared with \cite{main_ref} for elliptic problems with random coefficients, the additional convection term complicates the analysis.
    \item The convergence rate of the QMC method is independent of $\varepsilon$, provided the covariance kernel of $\bb$ is sufficiently regular.
    \item We present several numerical tests that confirm our theoretical findings. To the best of our knowledge, this is the first QMC method applied to convection-dominated diffusion problems.
\end{itemize}

The remainder of the paper is structured as follows. In Section~\ref{sec:problem-set}, we provide a detailed problem description and its finite element discretization. Section~\ref{sec:KL-truncation} analyzes the error resulting from dimension reduction. The main QMC error estimates and numerical experiments are presented in Sections~\ref{sec:qmc} and~\ref{sec:experiments}, respectively. Finally, we summarize our results in Section~\ref{sec:conclusion}.

Throughout, we employ standard notation for $L^p$ and $L^\infty$ spaces, Sobolev spaces $W_0^{1,p}(\Omega)$, and their associated (semi)norms. The conjugate exponent of $p$ is denoted by $q$, i.e., $1/p + 1/q = 1$. The symbol $C$ (with or without a subscript) denotes a generic positive constant that is independent of the mesh size but whose value may change from one occurrence to another.

\section{Problem setting}\label{sec:problem-set}
We consider the singularly perturbed convection--diffusion problem with an incompressible random velocity field. For each realization of the random parameter $\yy$, we seek $u(\cdot,\yy) \in V \coloneqq H^1_0(D)$ satisfying
\begin{equation}\label{eq:original}
\begin{aligned} 
-\varepsilon\Delta u(\xx,\yy)+\bbb(\xx,\yy)\cdot\nabla u(\xx,\yy) &= f(\xx) &&\text{in } D,\\
u(\xx,\yy) &= 0 &&\text{on } \partial D.
\end{aligned}
\end{equation}
Here, the stochastic variable $\yy = \{y_j\}_{j \geq 1}$ consists of countably many independent and identically distributed random variables $y_j$, each uniformly distributed on $[-\tfrac12,\tfrac12]$. Thus,
\[
\yy \in \big[-\tfrac12,\tfrac12\big]^{\mathbb{N}} =: U,
\]
and the parameter $\yy$ is distributed over $U$ according to the uniform probability measure
\[
\mu(\mathrm{d}\yy) = \bigotimes_{j=1}^{\infty} \mathrm{d}y_j = \mathrm{d}\yy.
\]

Throughout the paper, we assume the random velocity $\bbb(\cdot,\cdot) \in L^{\infty}(D \times U)$ in \eqref{eq:original} takes the form
\begin{equation}\label{eqn:model-b}
\bbb(\xx,\yy) \coloneqq \underline{\operatorname{curl}}(\phi)
\end{equation}
for some log-uniform random field $\phi$, where the operator $\underline{\operatorname{curl}}$ is defined by
\begin{equation*}
\underline{\operatorname{curl}}(\phi) \coloneqq
\begin{cases}
\nabla^{\perp}\phi(\xx,\yy), & \text{if } d = 2,\\[4pt]
\nabla \times \big(\phi(\xx,\yy),\,\phi(\xx,\yy),\,\phi(\xx,\yy)\big), & \text{if } d = 3.
\end{cases} 
\end{equation*}
A direct calculation yields the identity
\begin{align}\label{eq:identity=curl}
\underline{\operatorname{curl}}(vw)=\underline{\operatorname{curl}}(v)w+v\,\underline{\operatorname{curl}}(w) \quad\forall v,w\in H^1(D).
\end{align}
We further assume the stream function can be expressed as $\phi(\xx,\yy)=\exp\big(a(\xx,\yy)\big)$, where the random field $a(\xx,\yy)$ admits the Karhunen--Lo\`{e}ve (KL) expansion
\begin{equation}\label{main_rf_exp}
a(\xx,\yy)=\overline{a}(\xx)+\sum_{j\geq1}y_j\psi_j(\xx),\quad \xx\in D,\;\yy\in U,
\end{equation}
with $\psi_j(\xx)=\sqrt{\lambda_j}\,\widehat{\psi}_j(\xx)$. Here, $(\lambda_j,\widehat{\psi}_j)$ are eigenpairs of the corresponding covariance operator, ordered so that $\lambda_j$ is nonincreasing and $\widehat{\psi}_j$ is normalized by $\|\widehat{\psi}_j\|_{L^2(D)}=1$.

Several remarks are in order regarding the choice \eqref{eqn:model-b}--\eqref{main_rf_exp}. First, the representation $\bb = \underline{\operatorname{curl}}(\phi)$ ensures the velocity field is divergence-free for every realization, i.e., $\nabla\cdot\bb(\cdot,\yy)=0$ for any $\yy\in U$. Second, while log-normal random fields are common in random PDE studies \cite{HarbrechtSiebenmorgen:2016, main_ref, QMC_multi_2}, the convective term requires boundedness of the random field. To meet this requirement, we employ a uniform random field as in \eqref{main_rf_exp} (see, e.g., \cite{cohen2010convergence, KSS2012, QMC_multi_4}). This choice is made for technical convenience but preserves the stochastic nature of the velocity field.

The mean of $a(\xx,\cdot)$ is $\overline{a}(\xx)$, and its covariance is given by
\begin{align*}
\mathbb{E}\big[(a(\xx,\cdot)-\overline{a}(\xx))(a(\xx',\cdot)-\overline{a}(\xx'))\big]
&= \int_U (a(\xx,\yy)-\overline{a}(\xx))(a(\xx',\yy)-\overline{a}(\xx'))\,\mathrm{d}\yy \\
&= \frac{1}{12}\sum_{j=1}^\infty\psi_j(\xx)\psi_j(\xx'),
\end{align*}
where integration over $U$ is understood as the limit
\[
\int_U F(\yy)\,\mathrm{d}\yy\coloneqq\lim_{s\to\infty}\int_{[-\frac12,\frac12]^s}F(y_1,\dots,y_s,0,0,\dots)\,\mathrm{d}y_1\cdots\mathrm{d}y_s.
\]

We now state three assumptions that will be used throughout the paper. The first concerns the regularity of the random field $a$.

\begin{assumption}\label{ass_1}
For some $k \ge \frac{d}{2}+1$, we assume $a(\cdot,\cdot) \in L^{2}(\Omega; H^k(D))$.
\end{assumption}

The second assumption prescribes a decay condition on the fluctuation coefficients $\psi_j$. Define the sequence $\boldsymbol{g} = \{g_j\}_{j\ge 1}$ by
\begin{equation}\label{eq:defi-gj-bargj}
g_j \coloneqq \max\big\{\|\psi_j\|_{C(\overline{D})},\; \|\nabla\psi_j\|_{C(\overline{D})}\big\}.
\end{equation}

\begin{assumption}\label{ass_2}
For the sequence $\boldsymbol{g} = \{g_j\}_{j\ge 1}$, we assume
\[
\sum_{j\geq1}g_j^p < \infty \quad \text{for some } p \in (0,1].
\]
\end{assumption}

As will become clear later, the exponent $p$ determines the convergence rate of the QMC integration. For instance, if Assumption~\ref{ass_2} holds with $p = \frac23$, then the QMC method with $N$ points achieves a convergence rate of $\mathcal{O}(N^{-1+\delta})$. This condition is satisfied, for example, when $g_j \lesssim j^{-3/2-\alpha}$ for some $\alpha>0$.

Finally, we assume uniform bounds on the random field $a$.

\begin{assumption}\label{ass_3}
There exist constants $a_{\min}, a_{\max} > 0$ such that
\[
\overline{a} \in W^{1,\infty}(D) \quad\text{and}\quad 
a_{\min} \le a(\xx,\yy) \le a_{\max} \quad \text{for all } \xx \in D,\; \yy \in U.
\]
\end{assumption}

Under Assumption~\ref{ass_1}, the eigenfunctions decay according to the following lemma, whose proof follows directly from Theorem 3.2 in \cite{svd_decay} together with Lemma 2.1 of \cite{KL_GL}.

\begin{lemma}\label{new_ass}
Suppose Assumption~\ref{ass_1} holds. Then for $g_j$ defined in \eqref{eq:defi-gj-bargj}, we have, for sufficiently large $j \in \mathbb{N}$, that
\[
g_j \lesssim j^{\frac12 - \frac{k-1}{d}}.
\]
\end{lemma}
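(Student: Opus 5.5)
The plan is to combine an eigenvalue decay estimate for smooth covariance kernels with an interpolation/Sobolev embedding bound for the normalized eigenfunctions. We need to control both $\|\psi_j\|_{C(\overline D)}$ and $\|\nabla\psi_j\|_{C(\overline D)}$, where $\psi_j=\sqrt{\lambda_j}\,\widehat\psi_j$. This means bounding $\sqrt{\lambda_j}$ times the $W^{1,\infty}$ norm of $\widehat\psi_j$.

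\textbf{Step 1 (eigenvalue decay).} Under Assumption~\ref{ass_1}, the covariance kernel $\mathrm{Cov}(\xx,\xx')$ lies in $H^k(D)\otimes H^k(D)$, so the associated integral operator has $H^k$-smoothing. Theorem 3.2 of \cite{svd_decay} then gives the algebraic decay
\[
\lambda_j \lesssim j^{-2k/d}
\]
for sufficiently large $j$. Equivalently, the singular values of the map $\yy$-space $\to L^2(D)$ defined by $a$ decay like $j^{-k/d}$.

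\textbf{Step 2 (eigenfunction growth).} Since $\widehat\psi_j=\lambda_j^{-1}\int_D \mathrm{Cov}(\cdot,\xx')\widehat\psi_j(\xx')\,\mathrm{d}\xx'$, we have $\|\widehat\psi_j\|_{H^k(D)}\lesssim \lambda_j^{-1/2}$. This follows as in Lemma 2.1 of \cite{KL_GL}, by writing $\sqrt{\lambda_j}\,\widehat\psi_j=\mathbb{E}[(a-\overline a)Y_j]$ with $Y_j$ the normalized KL variable and applying Cauchy--Schwarz, so that $\|\psi_j\|_{H^k}\le \|a-\overline a\|_{L^2(\Omega;H^k)}$ uniformly in $j$. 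Interpolating between $\|\widehat\psi_j\|_{L^2}=1$ and this $H^k$ bound gives
\[
\|\widehat\psi_j\|_{H^{s}}\lesssim \lambda_j^{-s/(2k)}
\]
for $0\le s\le k$.

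\textbf{Step 3 (embedding).} Use $H^{s}(D)\hookrightarrow W^{1,\infty}(D)$ for $s>d/2+1$; the requirement $k\ge d/2+1$ is exactly what makes this admissible. Combining with Steps 1 and 2,
\[
g_j\lesssim \sqrt{\lambda_j}\,\lambda_j^{-s/(2k)}=\lambda_j^{(k-s)/(2k)}\lesssim j^{-(k-s)/d}.
\]
Taking $s$ down to $d/2+1$ formally yields $j^{-(k-1)/d+1/2}=j^{1/2-(k-1)/d}$, as claimed.

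The main obstacle is the borderline case: the embedding into $W^{1,\infty}$ fails at exactly $s=d/2+1$. I would handle it either by accepting an arbitrarily small loss $\delta$ in the exponent, or by using the sharper form of Lemma 2.1 in \cite{KL_GL}, which bounds the sup-norm of eigenfunctions directly via summed kernel estimates. Failing that, I would work in $W^{1,p}$ with a large $p$, combined with a Gagliardo--Nirenberg inequality, to recover the stated exponent.
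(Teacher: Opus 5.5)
Your outline follows the route the paper indicates (eigenvalue decay from Theorem~3.2 of \cite{svd_decay} plus the eigenfunction bound of Lemma~2.1 of \cite{KL_GL}), and Steps~1--2 are sound. However, Step~3 does not prove the stated exponent.

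The embedding $H^{s}(D)\hookrightarrow W^{1,\infty}(D)$ requires $s>d/2+1$. For every such $s$, your bounds give only
\[
g_j\lesssim j^{-(k-s)/d}=j^{\frac12-\frac{k-1}{d}+\frac{s-d/2-1}{d}}.
\]
``Taking $s$ down to $d/2+1$'' is exactly the step that is not allowed. So your option (a) proves a strictly weaker lemma, and option (b) is not yet an argument.

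Also, $k\ge d/2+1$ is not what makes the embedding admissible. At the endpoint $k=d/2+1$ the claim reads $g_j\lesssim 1$, and this can fail outright because $H^{d/2+1}(D)\not\subset W^{1,\infty}(D)$. For example, $\psi_j$ with pairwise disjoint supports, each in $H^{d/2+1}(D)\setminus W^{1,\infty}(D)$ and with $\sum_j\|\psi_j\|^2_{H^{d/2+1}(D)}<\infty$, give $g_j=\infty$. So the lemma has to be read with $k>d/2+1$.

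For $k>d/2+1$ the gap closes in either of two ways.

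(i) Use the rate $\lambda_j\lesssim j^{-1-2k/d}$ that \cite{svd_decay} actually establishes; the extra $j^{-1}$ reflects that $Y\mapsto\mathbb{E}[(a-\overline a)Y]$ is Hilbert--Schmidt into $H^k(D)$. Combine it with a non-borderline $s$. Then
\[
g_j\lesssim\lambda_j^{(k-s)/(2k)}\lesssim j^{-(d+2k)(k-s)/(2kd)}\le j^{\frac12-\frac{k-1}{d}}
\]
whenever $s\le 2k(d+1)/(d+2k)$. Since $2k(d+1)/(d+2k)>d/2+1$ exactly when $k>d/2+1$, an admissible $s$ exists and the $\delta$-loss is absorbed.

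(ii) Keep $\lambda_j\lesssim j^{-2k/d}$, but replace ``interpolate in $H^s$, then embed'' by an inequality that holds at the critical index. One choice is the Gagliardo--Nirenberg inequality
\[
\|\nabla v\|_{L^\infty(D)}\lesssim\|v\|_{L^2(D)}^{1-\theta}\|v\|_{H^k(D)}^{\theta}+\|v\|_{L^2(D)},\qquad \theta=(d/2+1)/k,
\]
obtained via an extension operator. It is valid because $\theta<1$: the exceptional case in Nirenberg's theorem excludes only $\theta=1$. Alternatively, use $(L^2(D),H^k(D))_{\theta,1}=B^{d/2+1}_{2,1}(D)\hookrightarrow C^1(\overline D)$ together with $\|v\|_{(L^2,H^k)_{\theta,1}}\lesssim\|v\|_{L^2}^{1-\theta}\|v\|_{H^k}^{\theta}$.

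This is your option (c) made precise. It gives $\|\widehat\psi_j\|_{W^{1,\infty}(D)}\lesssim\lambda_j^{-(d/2+1)/(2k)}$, hence exactly $g_j\lesssim j^{\frac12-\frac{k-1}{d}}$. The $C(\overline D)$ part of $g_j$ is handled the same way with a smaller index.
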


Consequently, if $k > \frac{d}{2} + \frac{d}{p} + 1$ in Assumption~\ref{ass_1}, then Lemma~\ref{new_ass} implies that Assumption~\ref{ass_2} is automatically satisfied.

\subsection{Finite element approximation}
We begin with the parametric weak formulation of problem \eqref{eq:original}. Define the bilinear form $e(\cdot,\cdot)$ on $V \times V$ by
\begin{align}\label{eq:bilinear-form}
e(w,v) \coloneqq \varepsilon\int_{D}\nabla w(\xx)\cdot \nabla v(\xx)\,\mathrm{d}\xx
    + \int_{D}\big(\bm{b}(\xx,\yy)\cdot\nabla w(\xx)\big)v(\xx)\,\mathrm{d}\xx
  \quad \text{for all } w,v \in V.
\end{align}
Since $\nabla\cdot \bm{b}(\cdot,\yy)=0$ for each $\yy \in U$, integration by parts shows that $e(\cdot,\cdot)$ is $V$-elliptic:
\begin{equation}\label{eqn:v-elliptic}
  e(v,v) \ge \varepsilon \|\nabla v\|_{L^2(D)}^2 \quad\text{for all } v \in V.
\end{equation}
Using Poincaré's inequality, we also obtain boundedness:
\[
|e(w,v)| \le \big(\varepsilon + \Const{p} \|\bbb\|_{L^{\infty}(U\times D)}\big)\|\nabla w\|_{L^2(D)}\|\nabla v\|_{L^2(D)},
\]
where $\Const{p} \coloneqq \operatorname{diam}(D)/\pi$ is the Poincaré constant.

The weak formulation of \eqref{eq:original} then reads: for each $\yy\in U$, find $u(\cdot,\yy)\in V$ such that
\begin{equation}\label{weak_form_par}
   e(u(\cdot,\yy),v) = (f,v)_D \quad \text{for all } v \in V,
\end{equation}
where $(\cdot,\cdot)_D$ denotes the $L^2$-inner product over $D$. Existence and uniqueness of the solution follow directly from the Lax--Milgram theorem \cite{evans}.

We now describe the finite element discretization of \eqref{weak_form_par}. Let $\{\mathcal{T}_h\}_{h>0}$ be a family of shape-regular, conforming triangulations of $\Omega \subseteq \mathbb{R}^d$ ($d \in \{2,3\}$), consisting of $d$-dimensional simplices (see, e.g., \cite{EG21}). Here $h = \max_{K \in \mathcal{T}_h} h_K$ denotes the maximal mesh size, with $h_K$ the diameter of simplex $K$. Shape regularity means there exists a constant $C>0$, independent of $h$, such that $\max_{K \in \mathcal{T}_h} h_K/\rho_K \le C$, where $\rho_K$ is the diameter of the largest inscribed ball in $K$.

Define the conforming finite element space
\[
V_h = \{v_h \in V : v_h|_K \in P_i(K) \text{ for all } K \in \mathcal{T}_h,\; v_h|_{\partial D} = 0\},
\]
where $P_i(K)$ denotes polynomials of degree at most $i$ on $K$. The finite element approximation of \eqref{weak_form_par} is: for each $\yy \in U$, find $u_h(\cdot,\yy) \in V_h$ such that
\begin{equation}\label{FEM_main}
   e(u_h(\cdot,\yy),v_h) = (f, v_h)_D \quad\text{for all } v_h \in V_h.
\end{equation}

Choosing $v_h = u_h$ in \eqref{FEM_main} and applying Poincaré's inequality yields, for each $\yy\in U$,
\begin{equation}\label{ori_est}
    \|\nabla u_h(\cdot,\yy)\|_{L^2(D)} \le \frac{\Const{p}}{\varepsilon}\|f\|_{L^2(D)}.
\end{equation}

Moreover, by Galerkin orthogonality and standard interpolation estimates (see, e.g., \cite{EG21, BS08}), if $u(\cdot,\yy) \in H^m(D)$ for some $m \in \mathbb{N}$ and the finite element space uses polynomials of sufficiently high degree, we obtain the optimal error estimate
\[
\|\nabla(u(\cdot,\yy)-u_{h}(\cdot,\yy))\|_{L^2(D)}
\le \Big(1 + \frac{\Const{p}\|\bbb\|_{L^{\infty}(U\times D)}}{\varepsilon}\Big) h^{m-1} \|u(\cdot,\yy)\|_{H^m(D)}.
\]

\section{Truncation of the Karhunen--Lo\`{e}ve expansion}\label{sec:KL-truncation}
The next discretization step involves reducing the parametric dimension: in numerical applications, the infinite sum \eqref{main_rf_exp} must be truncated to a finite sum. We denote the $s$-term truncated uniform random field by
\[
a_s(\xx,\yy) = \overline{a}(\xx) + \sum_{j=1}^s y_j\psi_j(\xx).
\]
The corresponding truncated velocity field is then defined as
\begin{equation}\label{p_KL1}
\bbb^s(\xx,\yy) \coloneqq \underline{\operatorname{curl}}(\phi^s),
\end{equation}
with $\phi^s(\xx,\yy) = \exp(a_s(\xx,\yy))$. Note that $\bbb^{s}(\xx,\yy)$ can be interpreted as the velocity field $\bbb(\xx,\yy)$ evaluated at $\yy = (y_1,\dots,y_s,0,0,\dots)$. Throughout the paper, for any set of active coordinates $\nnu \subset \mathbb{N}$, we write vectors $\yy \in U$ with $y_j = 0$ for $j \notin \nnu$ as $(\yy_{\nnu};\boldsymbol{0})$.

Under Assumption \ref{ass_2}, we obtain the uniform bound
\begin{equation}\label{eq:defn-bs}
\|\phi^s(\cdot, \yy)\|_{C(\overline{D})} + \|\nabla \phi^s(\cdot, \yy)\|_{C(\overline{D})}
\leq 2\prod_{j=1}^s \exp(2g_j|y_j|)
\leq 2\exp\bigg(\sum_{j\geq1}g_j\bigg) =: \Const{B},
\end{equation}
where $\Const{B} > 0$ is a constant independent of $\yy \in U$.

We also define the truncated bilinear form $e^s(\cdot,\cdot)$ on $V \times V$ by
\begin{equation}\label{eq:bilinear-form-s}
\begin{aligned}
e^s(w,v) &\coloneqq \varepsilon\int_{D}\nabla w(\xx)\cdot \nabla v(\xx)\,\mathrm{d}\xx \\
&\quad + \int_{D}\big(\bm{b}^s(\xx,\yy)\cdot\nabla w(\xx)\big) v(\xx)\,\mathrm{d}\xx
\quad \text{for all } w,v \in V.
\end{aligned}
\end{equation}

Let $u_h^s$ denote the finite element approximation of the truncated parametric problem: for each $s \in \mathbb{N}$ and $\yy \in U$, find $u^s_h(\cdot,\yy) \in V_h$ such that
\begin{equation}\label{trunc_FEM_main}
e^s (u^s_h(\cdot,\yy),v_h) = (f ,v_h)_D \qquad \forall v_h \in V_h.
\end{equation}

Analogously to \eqref{ori_est}, we have for each $\yy \in U$ that
\begin{equation}\label{trunc_est_1}
\|\nabla u_h^s(\cdot,\yy)\|_{L^2(D)} \leq \frac{\Const{p}}{\varepsilon}\|f\|_{L^2(D)}.
\end{equation}

The main goal of this section is to estimate the error between $u_h(\cdot,\yy)$ and $u_h^s(\cdot,\yy)$. We begin with the following lemma.

\begin{lemma}\label{lem:trunc-error}
For each $s \in \mathbb{N}$ and $\yy \in U$, let $u_h(\cdot,\yy)$ and $u_h^s(\cdot,\yy)$ be the solutions to \eqref{FEM_main} and \eqref{trunc_FEM_main}, respectively. Then
\begin{equation}\label{trunc_est_2}
\|\nabla(u_h(\cdot,\yy)-u_h^s(\cdot,\yy))\|_{L^2(D)}
\leq \left(\frac{\Const{p}}{\varepsilon}\right)^2 \|\bbb(\cdot,\yy)-\bbb^s(\cdot,\yy)\|_{C(\overline{D})}\|f\|_{L^2(D)}.
\end{equation}
\end{lemma}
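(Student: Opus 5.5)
The plan is to use the standard Strang-type perturbation argument for Galerkin solutions with different bilinear forms. Let $w_h \coloneqq u_h(\cdot,\yy)-u_h^s(\cdot,\yy)\in V_h$. We subtract \eqref{trunc_FEM_main} from \eqref{FEM_main}; both right-hand sides are $(f,v_h)_D$, so $e(u_h,v_h)=e^s(u_h^s,v_h)$ for all $v_h\in V_h$. Adding and subtracting $e^s(u_h,v_h)$ gives the error equation
\[
e^s(w_h,v_h) = e^s(u_h,v_h)-e(u_h,v_h) = -\int_D \big((\bbb-\bbb^s)\cdot\nabla u_h\big)v_h\,\mathrm{d}\xx \qquad \forall v_h\in V_h,
\]
because the diffusion parts cancel.

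Next, test with $v_h=w_h$. Since $\bbb^s=\underline{\operatorname{curl}}(\phi^s)$ is also divergence-free, the coercivity \eqref{eqn:v-elliptic} holds for $e^s$ with the same constant, so
\[
\varepsilon\|\nabla w_h\|_{L^2(D)}^2 \le \|\bbb-\bbb^s\|_{C(\overline D)}\,\|\nabla u_h\|_{L^2(D)}\,\|w_h\|_{L^2(D)}.
\]
Apply Poincaré's inequality $\|w_h\|_{L^2(D)}\le \Const{p}\|\nabla w_h\|_{L^2(D)}$ and divide through by $\|\nabla w_h\|_{L^2(D)}$ (the case $w_h=0$ is trivial). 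This yields
\[
\|\nabla w_h\|_{L^2(D)}\le \frac{\Const{p}}{\varepsilon}\|\bbb-\bbb^s\|_{C(\overline D)}\|\nabla u_h\|_{L^2(D)}.
\]
Finally, insert the a priori bound \eqref{ori_est}, $\|\nabla u_h\|_{L^2(D)}\le \Const{p}\varepsilon^{-1}\|f\|_{L^2(D)}$, to obtain \eqref{trunc_est_2}.

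The only point needing care is the coercivity of $e^s$. It relies on $\nabla\cdot\bbb^s(\cdot,\yy)=0$ for the truncated field, which holds because $\bbb^s$ is a curl. Then, for $v\in V$, $\int_D(\bbb^s\cdot\nabla v)v\,\mathrm{d}\xx=\tfrac12\int_D\bbb^s\cdot\nabla(v^2)\,\mathrm{d}\xx=0$ by integration by parts. Everything else is routine.
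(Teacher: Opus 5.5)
Your proof is correct and is essentially the paper's argument, mirrored: the paper uses the coercivity of $e$ and writes $\varepsilon\|\nabla w_h\|_{L^2(D)}^2 = e^s(u_h^s,w_h)-e(u_h^s,w_h)$, then bounds $\nabla u_h^s$ via \eqref{trunc_est_1}, whereas you swap the roles of $(e,u_h)$ and $(e^s,u_h^s)$ and use \eqref{ori_est}. Both versions only need the relevant velocity field to be divergence-free, which holds for $\bbb$ and $\bbb^s$ alike.
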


\begin{proof}
Using Galerkin orthogonality, we have
\begin{align*}
&\varepsilon\|\nabla(u_h(\cdot,\yy)-u^s_h(\cdot,\yy))\|^2_{L^2(D)} \\
&= e(u_h(\cdot,\yy)-u^s_h(\cdot,\yy),\,u_h(\cdot,\yy)-u^s_h(\cdot,\yy)) \\
&= -e(u^s_h(\cdot,\yy),\,u_h(\cdot,\yy)-u^s_h(\cdot,\yy)) 
   + e^s(u^s_h(\cdot,\yy),\,u_h(\cdot,\yy)-u^s_h(\cdot,\yy)) \\
&= \int_D (\bbb^s(\xx,\yy)-\bbb(\xx,\yy))\cdot\nabla u_h^s(\xx,\yy)\,
   (u_h(\xx,\yy)-u^s_h(\xx,\yy))\,\mathrm{d}\xx.
\end{align*}
Applying H\"older's inequality, Poincar\'e's inequality, and \eqref{trunc_est_1} yields
\begin{align*}
\varepsilon\|\nabla(u_h(\cdot,\yy)-u_h^s(\cdot,\yy))\|_{L^2(D)}
&\leq \Const{p} \|\nabla u^s_h(\cdot,\yy)\|_{L^2(D)} \|\bbb(\cdot,\yy)-\bbb^s(\cdot,\yy)\|_{C(\overline{D})} \\
&\leq \frac{\Const{p}^2}{\varepsilon} \|f\|_{L^2(D)} \|\bbb(\cdot,\yy)-\bbb^s(\cdot,\yy)\|_{C(\overline{D})},
\end{align*}
which gives the desired estimate.
\end{proof}

We now state and prove the main result of this section, which quantifies the error introduced by dimension truncation.

\begin{theorem}[Error estimate for dimension truncation]\label{truncation_thm}
Suppose Assumption \ref{ass_1} holds and $\mathcal{G} \in V^*$. Then for any truncation dimension $s \in \mathbb{N}$,
\[
\mathbb{E}\big[|\mathcal{G}(u_h)-\mathcal{G}(u^s_h)|\big]
\leq C \left(\frac{\Const{p}}{\varepsilon}\right)^2 s^{-\frac{k}{d}+\frac{1}{d}+\frac{3}{2}},
\]
where $C > 0$ is a constant independent of $h$ and $s$.
\end{theorem}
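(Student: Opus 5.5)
Since $\mathcal{G}\in V^*$ is linear, for each $\yy\in U$ we have $|\mathcal{G}(u_h(\cdot,\yy))-\mathcal{G}(u_h^s(\cdot,\yy))|\le \|\mathcal{G}\|_{V^*}\|\nabla(u_h(\cdot,\yy)-u_h^s(\cdot,\yy))\|_{L^2(D)}$. Lemma~\ref{lem:trunc-error} then bounds this pointwise in $\yy$ by $\|\mathcal{G}\|_{V^*}(\Const{p}/\varepsilon)^2\|f\|_{L^2(D)}\|\bbb(\cdot,\yy)-\bbb^s(\cdot,\yy)\|_{C(\overline{D})}$. Integrating over $U$ reduces the theorem to a bound of order $s^{-\frac{k}{d}+\frac1d+\frac32}$ for $\mathbb{E}\big[\|\bbb-\bbb^s\|_{C(\overline D)}\big]$, or simply $\sup_{\yy\in U}$ of this quantity, with a constant independent of $h$, $s$ and $\varepsilon$. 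The only $\varepsilon$-dependence is then the factor $(\Const{p}/\varepsilon)^2$, as stated.

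To bound the velocity difference, I would use that $\underline{\operatorname{curl}}$ is a linear combination of first partial derivatives. This gives $\|\bbb-\bbb^s\|_{C(\overline D)}\le C\|\nabla(\phi-\phi^s)\|_{C(\overline D)}$. Write $\phi-\phi^s=e^{a_s}(e^{r_s}-1)$ with $r_s:=a-a_s=\sum_{j>s}y_j\psi_j$. Then
\[
\nabla(\phi-\phi^s)=e^{a_s}\nabla a_s\,(e^{r_s}-1)+e^{a_s}e^{r_s}\nabla r_s .
\]
Use Assumption~\ref{ass_3} ($\overline a\in W^{1,\infty}$, and $a$ bounded), the bound $|e^{t}-1|\le |t|e^{|t|}$, and the summability of $g_j$, which gives \eqref{eq:defn-bs}-type uniform bounds on $e^{a_s}$, $e^{r_s}$ and $\nabla a_s$. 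These yield
\[
\|\nabla(\phi-\phi^s)\|_{C(\overline D)}\le C\big(\|r_s\|_{C(\overline D)}+\|\nabla r_s\|_{C(\overline D)}\big)\le C\sum_{j>s}g_j,
\]
using $|y_j|\le\frac12$ and the definition \eqref{eq:defi-gj-bargj}.

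The last step is the tail sum. By Lemma~\ref{new_ass}, which follows from Assumption~\ref{ass_1}, $g_j\lesssim j^{\frac12-\frac{k-1}{d}}$ for large $j$. Comparing with an integral gives $\sum_{j>s}g_j\lesssim s^{\frac32-\frac{k-1}{d}}=s^{-\frac kd+\frac1d+\frac32}$, provided the exponent satisfies $\frac12-\frac{k-1}{d}<-1$. Finitely many initial terms only affect the constant.

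The main obstacle is this summability requirement. Assumption~\ref{ass_1} alone gives only $k\ge\frac d2+1$, which makes the exponent $\le 0$ but not $<-1$. I would therefore implicitly invoke Assumption~\ref{ass_2} (the paper uses $\sum g_j<\infty$ already in \eqref{eq:defn-bs}), or note that $k>\frac{3d}{2}+1$ is needed for the stated rate to decay. Under that standing hypothesis the tail estimate holds, and the constant $C$ collects $\|\mathcal{G}\|_{V^*}$, $\|f\|_{L^2(D)}$, $\Const{B}$ and the implied constant of Lemma~\ref{new_ass}, none of which depends on $h$ or $s$.
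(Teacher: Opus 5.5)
Your proposal is correct and follows essentially the paper's route: reduce via Lemma~\ref{lem:trunc-error} to a uniform bound on $\|\bbb-\bbb^s\|_{C(\overline D)}$, control this by $\|a-a_s\|_{C(\overline D)}+\|\nabla(a-a_s)\|_{C(\overline D)}\lesssim\sum_{j>s}g_j$ using uniform bounds on the exponential factors, and finish with Lemma~\ref{new_ass} and an integral comparison. The only difference is cosmetic: the paper splits $\underline{\operatorname{curl}}(a)\phi-\underline{\operatorname{curl}}(a_s)\phi^s$ and uses $|e^x-e^y|\le|x-y|(e^x+e^y)$, whereas you factor out $e^{a_s}$.

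Your caveat is well founded, since the paper's proof likewise silently uses Assumption~\ref{ass_3}, $\sum_j g_j<\infty$, and $\frac12-\frac{k-1}{d}<-1$. Once $\sum_j g_j<\infty$ is granted, the regime $\frac12-\frac{k-1}{d}\ge-1$ is harmless, because then $s^{-\frac kd+\frac1d+\frac32}\ge1$ and $\sum_{j>s}g_j\le\sum_j g_j$.
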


\begin{proof}
From the definitions of $\bbb$ and $\bbb^{s}$ in \eqref{eqn:model-b} and \eqref{p_KL1}, together with the triangle inequality, we obtain
\begin{align*}
&\mathbb{E}\big[\|\bbb-\bbb^s\|_{C(\overline{D})}\big] \\
&= \mathbb{E}\big[\|\underline{\operatorname{curl}}(a)\phi-\underline{\operatorname{curl}}(a_s)\phi^s\|_{C(\overline{D})}\big] \\
&\leq \sup_{\yy\in U}\|\underline{\operatorname{curl}}(a(\cdot,\yy))\phi(\cdot,\yy)-\underline{\operatorname{curl}}(a_s(\cdot,\yy))\phi^s(\cdot,\yy)\|_{C(\overline{D})} \\
&\leq \sup_{\yy\in U}\|\underline{\operatorname{curl}}(a(\cdot,\yy))(\phi(\cdot,\yy)-\phi^s(\cdot,\yy))\|_{C(\overline{D})} \\
&\quad + \sup_{\yy\in U}\|\underline{\operatorname{curl}}(a(\cdot,\yy)-a_s(\cdot,\yy))\phi^s(\cdot,\yy)\|_{C(\overline{D})} \\
&\leq \|\underline{\operatorname{curl}}(a)\|_{L^{\infty}(U,C(\overline{D}))}\sup_{\yy\in U}\|\phi(\cdot,\yy)-\phi^s(\cdot,\yy)\|_{C(\overline{D})} \\
&\quad + \|\phi^s\|_{L^{\infty}(U,C(\overline{D}))}\sup_{\yy\in U}\|\underline{\operatorname{curl}}(a(\cdot,\yy)-a_s(\cdot,\yy))\|_{C(\overline{D})}.
\end{align*}

Assumption \ref{ass_3} implies
\[
\|a\|_{L^\infty(U,C(\overline{D}))} + \|\underline{\operatorname{curl}}(a)\|_{L^\infty(U,C(\overline{D}))} < \infty.
\]
Moreover, by the mean value theorem,
\begin{equation}\label{MVT}
|e^x - e^y| \leq |x-y|(e^x + e^y) \quad \text{for all } x,y \in \mathbb{R}.
\end{equation}

Combining these facts with Lemma \ref{new_ass} yields
\begin{align*}
&\mathbb{E}\big[\|\bbb-\bbb^s\|_{C(\overline{D})}\big] \\
&\lesssim \sup_{\yy\in U}\|\phi(\cdot,\yy)-\phi^s(\cdot,\yy)\|_{C(\overline{D})}
   + \|\phi^s\|_{L^{\infty}(U,C(\overline{D}))}\|\underline{\operatorname{curl}}(a-a_s)\|_{L^{\infty}(U,C(\overline{D}))} \\
&\lesssim \|\phi+\phi^s\|_{L^\infty(U;C(\overline{D}))}\|a-a_s\|_{L^\infty(U;C(\overline{D}))}
   + \|\phi^s\|_{L^\infty(U,C(\overline{D}))}\|\underline{\operatorname{curl}}(a-a_s)\|_{L^\infty(U;C(\overline{D}))} \\
&\lesssim \|a-a_s\|_{L^\infty(U;C(\overline{D}))} + \|\underline{\operatorname{curl}}(a-a_s)\|_{L^\infty(U;C(\overline{D}))} \\
&\lesssim \sum_{j>s} g_j \lesssim \sum_{j>s} j^{\frac12 - \frac{k-1}{d}}
   \lesssim \int_{s}^{\infty} t^{\frac12 - \frac{k-1}{d}}\,\mathrm{d}t
   \lesssim s^{-\frac{k}{d}+\frac{1}{d}+\frac{3}{2}}.
\end{align*}

Finally, using \eqref{trunc_est_2}, we conclude
\begin{align*}
\mathbb{E}\big[|\mathcal{G}(u_h)-\mathcal{G}(u^s_h)|\big]
&\lesssim \mathbb{E}\big[|\mathcal{G}(u_h-u^s_h)|\big]
   \lesssim \mathbb{E}\big[\|\mathcal{G}\|_{V^*}\|\nabla(u_h-u^s_h)\|_{L^2(D)}\big] \\
&\lesssim \left(\frac{\Const{p}}{\varepsilon}\right)^2\mathbb{E}\big[\|\bbb-\bbb^s\|_{C(\overline{D})}\big]
   \lesssim \left(\frac{\Const{p}}{\varepsilon}\right)^2 s^{-\frac{k}{d}+\frac{1}{d}+\frac{3}{2}}.
\end{align*}
This completes the proof.
\end{proof}
\section{Quasi-Monte Carlo integration}\label{sec:qmc}
We now derive an error estimate for the QMC approximation of the quantity of interest $\mathbb{E}[\mathcal{G}(u^s_{h})]$, where $\mathcal{G}\in V^*$ is a linear functional and $u^s_h$ is the finite element approximation corresponding to the truncated random velocity in \eqref{trunc_FEM_main}. We adopt a specific QMC method, namely the \emph{randomly-shifted lattice rule} \cite{qmc_ref_5, qmc_ref_6}. This section first describes QMC integration in a finite-dimensional setting. Since the regularity of the integrand is crucial for QMC error analysis, we establish the regularity of the finite element approximation with respect to the stochastic parameter in Subsection \ref{subsec:regularity-parameter}. Finally, we present the mathematical analysis of the mean-square error for the QMC integration.

\subsection{Analytic framework for QMC integration}
To compute the expectation of the random solution for problem \eqref{eq:original}, we consider integrals of the form
\[
I_s(F)\coloneqq\int_{\left[-\frac{1}{2},\frac{1}{2}\right]^s}F(\yy)\,\mathrm{d}\yy,
\]
where in our application $F(\yy)=\mathcal{G}(u^s_h(\cdot,\yy))$ with $\yy\in U$. Note that the integration domain $\left[-\frac{1}{2},\frac{1}{2}\right]^s$ differs from the usual unit hypercube $[0,1]^s$ convention in QMC methods, but this can be handled by a straightforward translation argument.

A QMC sampling approximation to this integral is given by
\[
Q_{s,N}(F)\coloneqq\frac{1}{N}\sum^N_{i=1}F(\yy^{(i)})
\]
with carefully selected points $\yy^{(1)},\dots,\yy^{(N)}\in\left[-\frac{1}{2},\frac{1}{2}\right]^s$, to be specified later.

Most QMC methods in the literature are formulated for functions defined on the unit hypercube. The relevant function spaces for QMC analysis are the \emph{weighted Sobolev spaces} on $(0,1)^s$, which consist of functions whose mixed first derivatives are square-integrable. Classical theory shows that if the integrand belongs to an appropriate weighted Sobolev space, randomly-shifted lattice rules can achieve an (almost) optimal convergence rate of $\mathcal{O}(n^{-1})$; see \cite{qmc_ref_1, qmc_ref_2, qmc_ref_3, qmc_ref_4, qmc_ref_5} and recent surveys \cite{qmc_ref_6, qmc_ref_7}.

In this study, we assume the integrand $F$ belongs to a corresponding weighted Sobolev space $\mathcal{W}_{s,\gamma}$ on $\left[-\frac{1}{2},\frac{1}{2}\right]^s$, with norm defined by
\begin{equation}\label{w_sobolev}
    \|F\|_{\mathcal{W}_{s,\gamma}}=\bigg(\sum_{\nnu\subseteq\{1:s\}}\gamma_{\nnu}^{-1}\int_{\left[-\frac{1}{2},\frac{1}{2}\right]^{|\nnu|}}\bigg(\int_{\left[-\frac{1}{2},\frac{1}{2}\right]^{s-|\nnu|}}\frac{\partial^{|\nnu|}F}{\partial\yy_{\nnu}}(\yy_{\nnu};\yy_{\{1:s\}\setminus\nnu})\,\mathrm{d}\yy_{\{1:s\}\setminus\nnu}\bigg)^2\,\mathrm{d}\yy_{\nnu}\bigg)^{\frac{1}{2}},
\end{equation}
where $\{1:s\}$ denotes $\{1,2,\dots,s\}$, and $\frac{\partial^{|\nnu|}F}{\partial\yy_{\nnu}}$ denotes the mixed partial derivative with respect to the ``active" variables $y_j$ for $j\in\nnu$, while $\yy_{\{1:s\}\setminus\nnu}$ denotes the ``inactive" variables $y_j$ for $j\notin\nnu$.

For a finite index set $\nnu\subset\mathbb{N}$, we assign a weight parameter $\gamma_{\nnu}>0$ representing the relative importance of those variables. We denote $\boldsymbol{\gamma}=(\gamma_{\nnu})_{\nnu\subset\mathbb{N}}$ and set $\gamma_{\emptyset}\coloneqq1$. In \cite{qmc_ref_14}, the authors considered only \emph{product weights}, meaning there exists a sequence $\gamma_1\geq\gamma_2\geq\cdots>0$ such that $\gamma_{\nnu}\coloneqq\prod_{j\in\nnu}\gamma_j$. For generalizations, see \cite{qmc_ref_13}. The choice of weight parameters $\gamma_{\nnu}$ is crucial to ensure that the constant in the QMC error bound does not grow exponentially as $s\rightarrow\infty$.

In this study, we adopt the \emph{product and order dependent weights} (POD weights) introduced in \cite{KSS2012}. Specifically, we consider a nonincreasing sequence $\gamma_1\geq\gamma_2\geq\cdots\geq\gamma_s>0$ and constants $\Gamma_0=\Gamma_1=1,\Gamma_2,\dots,\Gamma_s$, with weight parameters given by $\gamma_{\nnu}\coloneqq\Gamma_{|\nnu|}\prod_{j\in\nnu}\gamma_j$.

Within this framework, the \emph{worst-case error} of a QMC quadrature rule is defined as
\begin{equation}\label{wor_error}
    e^{\mathrm{wor}}(Q_{s,N};\mathcal{W}_{s,\gamma})\coloneqq \sup_{\|F\|_{\mathcal{W}_{s,\gamma}}\leq1}|I_s(F)-Q_{s,N}(F)|.
\end{equation}
By the linearity of $I_s(\cdot)$ and $Q_{s,N}(\cdot)$, we obtain the error bound
\begin{equation}\label{wor_error_est}
    |I_s(F)-Q_{s,N}(F)|\leq e^{\mathrm{wor}}(Q_{s,N};\mathcal{W}_{s,\gamma})\|F\|_{\mathcal{W}_{s,\gamma}}\quad \text{for any }F\in \mathcal{W}_{s,\gamma}.
\end{equation}

We consider a family of QMC rules called \emph{shifted rank-$1$ lattice rules}. The quadrature points are given by
\begin{equation}\label{QMC_point}
    \yy^{(i)}={\mathrm{frac}}\left(\frac{i\boldsymbol{z}}{N}+\Delta\right)-\left(\frac{1}{2},\dots,\frac{1}{2}\right),\quad i=1,\dots,N,  
\end{equation}
where $\boldsymbol{z}\in\mathbb{Z}^s$ is a \emph{generating vector}, $\Delta\in[0,1]^s$ is a \emph{shift}, and $\mathrm{frac}(\cdot)$ denotes the fractional part of each component. The subtraction by $(\frac{1}{2},\dots,\frac{1}{2})$ in \eqref{QMC_point} translates from $[0,1]^s$ to $[-\frac{1}{2},\frac{1}{2}]^s$.

The quality of the approximation depends on the choice of the generating vector $\boldsymbol{z}$. Assuming $\boldsymbol{z}$ is appropriately chosen and fixed, we denote the associated shifted lattice rule with random shift $\Delta$ by $Q_{s,N}(\Delta;F)$. The following theorem, whose proof can be found in \cite{KSS2012} and references therein, provides the convergence result. Here, $\zeta(x)\coloneqq\sum^{\infty}_{k=1}k^{-x}$ denotes the Riemann zeta function, and $\varphi(N)\coloneqq|\{z\in\mathbb{Z}:1\leq z\leq N-1\,\,\text{and}\,\,\gcd(z,N)=1\}|$ denotes the Euler totient function. Note that $\varphi(p)=p-1$ for $p$ prime, and $\frac{1}{\varphi(N)}<\frac{9}{N}$ for all $N\leq 10^{30}$, allowing us to replace $\varphi(N)$ by $\frac{C}{N}$ for some constant $C>0$ in practice.

\begin{theorem}\label{mse_0}
    Suppose $F\in\mathcal{W}_{s,\gamma}$ for $s\in\mathbb{N}$ and a particular choice of weights $\gamma$. Then a randomly-shifted lattice rule can be constructed via a component-by-component algorithm such that the root-mean-square error satisfies, for any $\lambda\in(1/2,1]$,
    \[
        \sqrt{\mathbb{E}^{\Delta}\left[|I_s(F)-Q_{s,N}(\cdot;F)|^2\right]}\leq\left(\sum_{\emptyset\neq\nnu\subseteq\{1:s\}}\gamma_{\nnu}^{\lambda}\left(\frac{2\zeta(2\lambda)}{(2\pi^2)\lambda}\right)^{|\nnu|}\right)^{\frac{1}{2\lambda}}\left[\varphi(N)\right]^{-\frac{1}{2\lambda}}\|F\|_{\mathcal{W}_{s,\gamma}},
    \]
    where $\mathbb{E}^{\Delta}[\cdot]$ denotes expectation with respect to the random shift $\Delta$ uniformly distributed over $[0,1]^s$.
\end{theorem}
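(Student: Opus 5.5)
This is the standard randomly-shifted lattice rule bound of Kuo--Schwab--Sloan type. The plan has three steps: reduce to the shift-averaged worst-case error, expand it in Fourier series, and control it by a component-by-component (CBC) induction. Throughout, work on $[0,1]^s$ after the translation $\yy\mapsto\yy+(\tfrac12,\dots,\tfrac12)$, which maps $\mathcal{W}_{s,\gamma}$ isometrically onto the usual unanchored weighted Sobolev space.

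\textbf{Step 1 (reduction to the shift-averaged kernel).} The space is a reproducing kernel Hilbert space with kernel
\[
K(\xx,\yy)=\sum_{\nnu\subseteq\{1:s\}}\gamma_{\nnu}\prod_{j\in\nnu}\eta(x_j,y_j),\qquad \eta(x,y)=\tfrac12 B_2(|x-y|)+(x-\tfrac12)(y-\tfrac12),
\]
where $B_2$ is the Bernoulli polynomial. By \eqref{wor_error_est} applied pointwise in $\Delta$, it suffices to bound the root-mean-square of $e^{\mathrm{wor}}(Q_{s,N}(\Delta;\cdot);\mathcal{W}_{s,\gamma})$ over the shift. Averaging the squared worst-case error over $\Delta$ yields the shift-invariant kernel $\sum_{\nnu}\gamma_{\nnu}\prod_{j\in\nnu}B_2(\{x_j-y_j\})$. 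For a rank-1 lattice, its Fourier expansion gives
\[
\big[e^{\mathrm{sh}}_{N,s}(\boldsymbol z)\big]^2=\sum_{\emptyset\ne\nnu}\gamma_{\nnu}\,\frac1N\sum_{k=0}^{N-1}\prod_{j\in\nnu}B_2\!\Big(\Big\{\tfrac{kz_j}{N}\Big\}\Big),
\]
with $B_2(x)=\frac{1}{2\pi^2}\sum_{h\ne0}h^{-2}e^{2\pi i h x}$. By Fubini, $\mathbb{E}^\Delta|I_s(F)-Q_{s,N}(\Delta;F)|^2\le [e^{\mathrm{sh}}_{N,s}(\boldsymbol z)]^2\|F\|^2_{\mathcal{W}_{s,\gamma}}$.

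\textbf{Step 2 (CBC induction).} Restrict $z_j$ to the set $\{1\le z\le N-1:\gcd(z,N)=1\}$, which has $\varphi(N)$ elements. Then prove by induction on $s$ that the greedily chosen $\boldsymbol z$ satisfies
\[
[e^{\mathrm{sh}}_{N,s}(\boldsymbol z)]^2\le\Big(\varphi(N)^{-1}\sum_{\emptyset\ne\nnu\subseteq\{1:s\}}\gamma_{\nnu}^\lambda\rho(\lambda)^{|\nnu|}\Big)^{1/\lambda},\qquad \rho(\lambda)=\frac{2\zeta(2\lambda)}{(2\pi^2)^\lambda}.
\]
In the inductive step, split the squared error into the part independent of $z_s$ and a remainder $\theta(z_s)$ collecting the sets $\nnu\ni s$. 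Since $z_s$ minimizes, for any $\lambda\in(1/2,1]$ Jensen's inequality $(\sum a_i)^\lambda\le\sum a_i^\lambda$ gives
\[
\theta(z_s)^\lambda\le\frac1{\varphi(N)}\sum_{z}\theta(z)^\lambda.
\]
Expand $\theta$ in its Fourier series over the dual lattice $\{\boldsymbol h:\boldsymbol h\cdot\boldsymbol z\equiv0 \pmod N\}$. Apply Jensen termwise, use that the modes $h_s$ with $h_s\not\equiv 0 \pmod N$ are equidistributed over the admissible $z_s$, and bound the averaged $\lambda$-th powers by $\sum_{h\ne0}|h|^{-2\lambda}=2\zeta(2\lambda)$. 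The POD structure plays no role here, since the argument works for general weights.

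\textbf{Main obstacle.} The delicate step is the averaging over $z_s$ when $N$ is not prime. There, multiples $h_s\equiv0$ and noncoprime residues must be handled, and this is exactly where $\varphi(N)$ replaces $N-1$. I would follow the Dick/Kuo composite-$N$ argument: split the modes according to $\gcd(h_s,N)$ and use $\sum_{d\mid N}\varphi(d)=N$, absorbing the extra terms via the factor $\rho(\lambda)$. Once the inductive bound is established, taking square roots gives exactly the stated root-mean-square estimate, with exponents $\frac1{2\lambda}$.
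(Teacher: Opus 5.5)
Your plan is the standard argument (shift-averaged kernel, dual-lattice expansion, CBC induction via ``minimum $\le$ mean'' and subadditivity of $t\mapsto t^\lambda$). The paper gives no proof of its own and simply defers to \cite{KSS2012} and the references there, so your route is the same as the one it relies on. Two points need tightening.

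First, the modes $h_s\equiv 0 \pmod N$ are not only a composite-$N$ issue. They are present for prime $N$ too: for $\nnu=\{s\}$ the term equals $\gamma_{\{s\}}/(6N^2)$ for every admissible $z_s$. Averaging over $z_s$ does not reduce them, and bounding them separately from the equidistributed modes gives the factor $\varphi(N)^{-1}+N^{-2\lambda}$, not $\varphi(N)^{-1}$. The clean step is to fix $\boldsymbol h_{\nnu}$ with $\nnu\subseteq\{1:s-1\}$ and set $c=-\boldsymbol h_{\nnu}\cdot\boldsymbol z_{\nnu}$ and $g=\gcd(c,N)\in\{1,\dots,N\}$. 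Write $\mathbb{U}_N=\{1\le z\le N-1:\gcd(z,N)=1\}$. For $z_s\in\mathbb{U}_N$ the surviving $h_s$ are exactly those with $h_s\equiv c\,z_s^{-1}$. As $z_s$ runs over $\mathbb{U}_N$, the residue $c\,z_s^{-1}$ runs evenly over the $\varphi(N/g)$ residues whose gcd with $N$ is $g$. Hence
\[
\frac{1}{\varphi(N)}\sum_{z_s\in\mathbb{U}_N}\ \sum_{h_s\neq 0,\ h_s z_s\equiv c}|h_s|^{-2\lambda}\le\frac{2\zeta(2\lambda)}{g^{2\lambda}\,\varphi(N/g)}\le\frac{2\zeta(2\lambda)}{\varphi(N)},
\]
using $\varphi(N)\le g\,\varphi(N/g)$ and $2\lambda\ge 1$. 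This one estimate handles both $c\equiv 0$ (where $g=N$) and composite $N$. Note what does the work:
\begin{itemize}
\item the extra factor is absorbed by $\lambda>1/2$, not by $\rho(\lambda)$;
\item the identity $\sum_{d\mid N}\varphi(d)=N$ is not needed;
\item ``minimum $\le$ mean'' needs no Jensen. Subadditivity is what expands $\theta^\lambda$ termwise and closes the induction via $\big([e^{\mathrm{sh}}_{N,s-1}]^2+\theta(z_s)\big)^\lambda\le[e^{\mathrm{sh}}_{N,s-1}]^{2\lambda}+\theta(z_s)^\lambda$.
\end{itemize}

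Second, the displayed statement has $(2\pi^2)\lambda$, whereas \cite{KSS2012} and the paper's own $\rho(\lambda)$ in Theorem~\ref{mse_1} have $(2\pi^2)^\lambda$. Your argument yields the latter. Since $(2\pi^2)\lambda>(2\pi^2)^\lambda$ for $\lambda\in(1/2,1)$, the literal display is a strictly stronger claim. So you obtain the intended bound, not ``exactly the stated'' one, and you should say so rather than claim an exact match.
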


We will use this result together with the regularity analysis in the next subsection to derive an error estimate for the QMC quadrature rule for our problem.

\subsection{Regularity in parameter space} \label{subsec:regularity-parameter}
In this subsection, we establish the regularity of the finite element approximation $u_h^s$ with respect to the uniform random variables $\yy=\{y_j\}_{j=1}^s$. For any function $u(\xx, \yy)$ with $\xx\in D$ and $\yy\in \mathbb{R}^s$, we denote the mixed first-order derivative with respect to $\yy$ by $\partial^{\nnu} u = \partial^{\nu_1}_{y_1}\dots \partial^{\nu_s}_{y_s}u$, where $\nnu\in \{0, 1\}^s$ is a first-order multi-index. Note that $\nabla$ denotes the spatial gradient, distinct from the mixed derivatives $\partial^{\nnu}$ in parameter space.

We begin with an estimate for the mixed first-order derivative of the truncated velocity field $\bbb^s$ defined in \eqref{p_KL1}.

\begin{lemma}\label{lem:b_s}
    For given $s\in\mathbb{N}$, let $\nnu=\{\nu_j\}^s_{j=1}\neq \boldsymbol{0}$ be a multi-index with $\nu_j\in \{0, 1\}$, and let $g_j$ and $\Const{B}>0$ be defined in \eqref{eq:defi-gj-bargj} and \eqref{eq:defn-bs}, respectively. Then, for any $\yy\in U$, there holds
    \begin{equation} \label{eq:est-mixed-bs}
        \|\partial^{\nnu} \bbb^s(\cdot,\yy) \|_{C(\overline{D})} \le \Const{B}|\nnu| \bigg(\prod_{j=1}^s g_j^{\nu_j}\bigg).
    \end{equation}
\end{lemma}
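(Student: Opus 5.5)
We compute the mixed derivative of $\bbb^s$ explicitly. Since $\bbb^s=\underline{\operatorname{curl}}(\phi^s)$, $\underline{\operatorname{curl}}$ is a linear first-order differential operator in $\xx$, and the parameter derivatives commute with spatial ones, we have $\partial^{\nnu}\bbb^s=\underline{\operatorname{curl}}(\partial^{\nnu}\phi^s)$. Moreover $\phi^s=\exp(a_s)$ with $a_s$ affine in $\yy$, so $\partial_{y_j}\phi^s=\psi_j\phi^s$. Because each $\nu_j\in\{0,1\}$, iterating this gives the closed form
\[
\partial^{\nnu}\phi^s(\xx,\yy)=\bigg(\prod_{j:\nu_j=1}\psi_j(\xx)\bigg)\phi^s(\xx,\yy).
\]

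Next we apply $\underline{\operatorname{curl}}$, using the product rule \eqref{eq:identity=curl} repeatedly (by induction on $|\nnu|$). The result is
\[
\partial^{\nnu}\bbb^s=\bigg(\prod_{j\in\nnu}\psi_j\bigg)\underline{\operatorname{curl}}(\phi^s)+\sum_{i\in\nnu}\bigg(\prod_{j\in\nnu,\,j\ne i}\psi_j\bigg)\phi^s\,\underline{\operatorname{curl}}(\psi_i),
\]
which is a sum of $|\nnu|+1$ terms.

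Now we bound each term in $C(\overline{D})$ via \eqref{eq:defi-gj-bargj}. We use $\|\psi_j\|_{C(\overline D)}\le g_j$, and $\|\underline{\operatorname{curl}}\,\psi_i\|_{C(\overline D)}\lesssim\|\nabla\psi_i\|_{C(\overline D)}\le g_i$. We also use $\|\phi^s\|_{C(\overline D)}$ and $\|\underline{\operatorname{curl}}\,\phi^s\|_{C(\overline D)}\lesssim \|\nabla\phi^s\|_{C(\overline D)}$, both controlled by \eqref{eq:defn-bs}. Each term is then bounded by a multiple of $\prod_j g_j^{\nu_j}$ times either $\|\phi^s\|$ or $\|\nabla\phi^s\|$.

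It remains to get the constant $\Const{B}|\nnu|$ rather than something like $(|\nnu|+1)\cdot\text{const}\cdot\Const{B}$. The main term and one of the remaining $|\nnu|$ terms can be grouped: together they are bounded by $\big(\|\phi^s\|+\|\nabla\phi^s\|\big)\prod_j g_j^{\nu_j}$, which is at most $\Const{B}\prod_j g_j^{\nu_j}$ by \eqref{eq:defn-bs}. The other $|\nnu|-1$ terms are each at most $\|\phi^s\|\prod_j g_j^{\nu_j}\le\Const{B}\prod_j g_j^{\nu_j}$. Summing gives $\Const{B}|\nnu|\prod_j g_j^{\nu_j}$.

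A remaining subtlety is the dimension-dependent constant in $|\underline{\operatorname{curl}}\,v|\le c_d|\nabla v|$, which is $c_d=1$ for $d=2$ and $\sqrt2$ for $d=3$. If this cannot be absorbed, I would accept a harmless dimensional factor or note that $\Const{B}$ (defined with a factor $2$ and a crude exponential bound) leaves enough room. This bookkeeping is the main obstacle; the structural computation itself is routine.
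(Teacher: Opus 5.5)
Your proposal is correct and follows essentially the same route as the paper: the closed form $\partial^{\nnu}\phi^s=\big(\prod_{j}\psi_j^{\nu_j}\big)\phi^s$, the product rule \eqref{eq:identity=curl}, and term-by-term bounds via $g_j$ and \eqref{eq:defn-bs}. The only difference is cosmetic: the paper collects the $|\nnu|+1$ terms by bounding $\|\nabla\phi^s\|+|\nnu|\,\|\phi^s\|\le|\nnu|\big(\|\nabla\phi^s\|+\|\phi^s\|\big)$ instead of grouping as you do. The paper also silently takes the dimensional factor you flag to be $1$; note that for $d=3$ with Euclidean pointwise norms it is $\sqrt{3}$, not $\sqrt{2}$ (e.g.\ $\nabla v=(1,-1,0)$ gives $\underline{\operatorname{curl}}\,v=(-1,-1,2)$), though it affects only the constant.
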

\begin{proof}
Using the identity \eqref{eq:identity=curl} and the triangle inequality, we have
\begin{align*}
    \|\partial^{\nnu} \bbb^s(\cdot,\yy) \|_{C(\overline{D})} 
    &= \|\partial^{\nnu} \underline{\operatorname{curl}} \left(\phi^s(\cdot,\yy)\right)\|_{C(\overline{D})} \\
    &= \bigg\|\underline{\operatorname{curl}} \bigg( \phi^s(\cdot,\yy) \prod_{j=1}^s \left(\psi_j(\cdot)\right)^{\nu_j} \bigg) \bigg\|_{C(\overline{D})} \\
    &\le \bigg\|\underline{\operatorname{curl}}\left( \phi^s(\cdot,\yy)\right) \prod_{j=1}^s \left(\psi_j(\cdot)\right)^{\nu_j} \bigg\|_{C(\overline{D})} \\
    &\quad + \bigg\| \phi^s(\cdot,\yy) \bigg(\sum_{j=1}^s \nu_j \underline{\operatorname{curl}} (\psi_j(\cdot)) \prod_{\substack{i=1 \\ i\neq j}}^s \left(\psi_i(\cdot)\right)^{\nu_i}\bigg) \bigg\|_{C(\overline{D})} \\
    &\le \left( \|\nabla \phi^s(\cdot,\yy)\|_{C(\overline{D})} + \|\phi^s(\cdot,\yy)\|_{C(\overline{D})} \right) |\nnu|\prod_{j=1}^s g_j^{\nu_j} \\
    &\le \Const{B}|\nnu| \bigg(\prod_{j=1}^s g_j^{\nu_j}\bigg), 
\end{align*} 
which completes the proof.
\end{proof}

We also need the following auxiliary result.

\begin{lemma} \label{lem:seq-Lambda}
Let $(\Lambda_n)_{n\in \mathbb{N}_0}$ be a sequence defined recursively by 
\[
    \Lambda_0=1\quad\text{and}\quad\Lambda_n=\sum^{n-1}_{i=0}\binom{n}{i}(n-i)\Lambda_i,  \quad\forall n\in\mathbb{N}.
\]
Then for some $\alpha>0$ with $\alpha e^{\alpha}\leq1$,
\begin{equation}\label{step_2}
    \Lambda_n\leq\frac{n!}{\alpha^n},\quad\forall n\in\mathbb{N}_0.
\end{equation}
\end{lemma}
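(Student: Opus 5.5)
We argue by strong induction on $n$, with the induction hypothesis $\Lambda_i\le i!/\alpha^i$ for all $0\le i\le n-1$. The base case $n=0$ is immediate since $\Lambda_0=1=0!/\alpha^0$. For the inductive step we insert the hypothesis into the recursion:
\begin{align*}
\Lambda_n\le\sum_{i=0}^{n-1}\binom{n}{i}(n-i)\frac{i!}{\alpha^i}
=\sum_{i=0}^{n-1}\frac{n!}{i!\,(n-i)!}(n-i)\frac{i!}{\alpha^i}
=n!\sum_{i=0}^{n-1}\frac{1}{(n-i-1)!\,\alpha^i}.
\end{align*}
Substituting $k=n-i$ (so $k$ runs from $1$ to $n$) turns this into
\begin{align*}
\Lambda_n\le\frac{n!}{\alpha^n}\sum_{k=1}^{n}\frac{\alpha^{k}}{(k-1)!}.
\end{align*}

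The remaining step is to show that the factor $\sum_{k=1}^n \alpha^k/(k-1)!$ is at most $1$. We bound the finite sum by the full series:
\begin{align*}
\sum_{k=1}^{n}\frac{\alpha^{k}}{(k-1)!}\le\alpha\sum_{m=0}^{\infty}\frac{\alpha^{m}}{m!}=\alpha e^{\alpha}\le 1.
\end{align*}
The last inequality is exactly the assumption on $\alpha$. Hence $\Lambda_n\le n!/\alpha^n$, which closes the induction.

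The only real work is the reindexing. The factor $(n-i)$ cancels against $(n-i)!$ to leave $(n-i-1)!$, and it is precisely this shifted factorial that produces the exponential series multiplied by $\alpha$. This is what makes the condition $\alpha e^{\alpha}\le 1$ the natural one. A suitable $\alpha$ exists because $\alpha e^{\alpha}$ is continuous and increasing on $[0,\infty)$ and vanishes at $\alpha=0$; for instance $\alpha\approx 0.567$ gives $\alpha e^\alpha=1$.
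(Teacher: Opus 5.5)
Your proof is correct and follows the paper's argument essentially verbatim: strong induction, insert the hypothesis into the recursion, cancel $(n-i)$ against $(n-i)!$, reindex to obtain $\frac{n!}{\alpha^n}\,\alpha\sum_{j=0}^{n-1}\alpha^j/j!$, and bound this by $\frac{n!}{\alpha^n}\,\alpha e^{\alpha}\le \frac{n!}{\alpha^n}$. Your added remarks are accurate: the bound holds for every $\alpha>0$ with $\alpha e^{\alpha}\le 1$, and such $\alpha$ exist.
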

\begin{proof}
We prove \eqref{step_2} by induction. The case $n=0$ is trivial. Assume \eqref{step_2} holds for all $i\leq n-1$. Then
\[
    \Lambda_n
    \leq \sum^{n-1}_{i=0}\binom{n}{i}(n-i)\frac{i!}{\alpha^i}
    =\frac{n!}{\alpha^n}\alpha\sum^{n-1}_{i=0}\frac{\alpha^{{n-i-1}}}{(n-i-1)!}
    =\frac{n!}{\alpha^n}\alpha\sum^{n-1}_{j=0}\frac{\alpha^{j}}{j!}
    \leq \frac{n!}{\alpha^n}\alpha e^{\alpha}
    \leq \frac{n!}{\alpha^n},
\]
which completes the induction.
\end{proof}

We now state and prove the main regularity estimate.

\begin{theorem}\label{reg_main_thm}
Given $s\in\mathbb{N}$, let $\nnu=\{\nu_j\}^s_{j=1}\neq \boldsymbol{0}$ be a multi-index with $\nu_j\in \{0, 1\}$, and let $g_j$ and $\Const{B}>0$ be defined in \eqref{eq:defi-gj-bargj} and \eqref{eq:defn-bs}, respectively. Then for any $\yy\in U$ and $h>0$, the truncated finite element approximation $u^s_h(\cdot,\yy)$ in \eqref{trunc_FEM_main} satisfies
\begin{equation}\label{reg_main}
\|\nabla\partial^{\nnu}u^s_h(\cdot,\yy)\|_{L^2(D)} \le|\nnu|!\left(\frac{2\Const{p} \Const{B}}{\varepsilon}\right)^{ |\nnu|+1}\bigg(\prod_{j=1}^s{g}_j^{\nu_j}\bigg)\|f\|_{L^2(D)},
\end{equation}
where $\Const{p}$ is the Poincar\'e constant.
\end{theorem}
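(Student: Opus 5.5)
We plan to differentiate the discrete variational problem \eqref{trunc_FEM_main} with respect to the parameters and argue by induction on $|\nnu|$. Since $V_h$ does not depend on $\yy$ and $\bbb^s$ depends smoothly on $\yy$, the map $\yy\mapsto u_h^s(\cdot,\yy)\in V_h$ is smooth. Applying $\partial^{\nnu}$ to $e^s(u_h^s,v_h)=(f,v_h)_D$ and using the Leibniz rule for mixed first-order derivatives (the sum runs over $\mm\le\nnu$, i.e.\ subsets of the active set, with binomial coefficients equal to one), we obtain for $\nnu\neq\boldsymbol 0$ and all $v_h\in V_h$:
\[
\varepsilon\int_D\nabla\partial^{\nnu}u_h^s\cdot\nabla v_h\,\mathrm{d}\xx+\int_D(\bbb^s\cdot\nabla\partial^{\nnu}u_h^s)\,v_h\,\mathrm{d}\xx
=-\sum_{\mm<\nnu}\int_D\big(\partial^{\nnu-\mm}\bbb^s\cdot\nabla\partial^{\mm}u_h^s\big)v_h\,\mathrm{d}\xx .
\]
Here $\partial^{\nnu}u_h^s\in V_h$, so we may test with $v_h=\partial^{\nnu}u_h^s$. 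Because $\nabla\cdot\bbb^s=0$, the convective term on the left vanishes, exactly as in \eqref{eqn:v-elliptic}.

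On the right we apply H\"older's inequality, Poincar\'e's inequality, and Lemma~\ref{lem:b_s}. Dividing by $\|\nabla\partial^{\nnu}u_h^s\|_{L^2(D)}$ gives the recursion
\[
\|\nabla\partial^{\nnu}u_h^s\|_{L^2(D)}\le\frac{\Const{p}\Const{B}}{\varepsilon}\sum_{\mm<\nnu}|\nnu-\mm|\Big(\prod_{j}g_j^{\nu_j-m_j}\Big)\|\nabla\partial^{\mm}u_h^s\|_{L^2(D)} .
\]
The base case is \eqref{trunc_est_1}: $\|\nabla u_h^s\|\le (\Const{p}/\varepsilon)\|f\|$.

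Next we make the ansatz $\|\nabla\partial^{\mm}u_h^s\|\le \Lambda_{|\mm|}(\Const{p}\Const{B}/\varepsilon)^{|\mm|}(\Const{p}/\varepsilon)\prod_j g_j^{m_j}\|f\|$. Substituting it, the product of the $g_j$ factors out, and grouping the subsets $\mm$ by $|\mm|=i$ gives $\binom{|\nnu|}{i}$ terms, each weighted by $(|\nnu|-i)$. This is exactly the recursion of Lemma~\ref{lem:seq-Lambda}, so the induction closes with $\Lambda_n$. Note that $\Const{B}\ge 2$ by \eqref{eq:defn-bs}, which lets powers of $\Const{B}$ be absorbed where needed.

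The final step bounds $\Lambda_n$. Lemma~\ref{lem:seq-Lambda} gives $\Lambda_n\le n!\,\alpha^{-n}$ with $\alpha e^\alpha\le1$. Taking $\alpha=1/2$ is admissible, since $\tfrac12 e^{1/2}\approx0.82\le1$. This yields $\Lambda_n\le n!\,2^n$, hence
\[
\|\nabla\partial^{\nnu}u_h^s\|\le |\nnu|!\,2^{|\nnu|}\Big(\frac{\Const{p}\Const{B}}{\varepsilon}\Big)^{|\nnu|}\frac{\Const{p}}{\varepsilon}\prod_j g_j^{\nu_j}\|f\|.
\]
This is bounded by the right-hand side of \eqref{reg_main}, since $\Const{p}/\varepsilon\le 2\Const{p}\Const{B}/\varepsilon$ (because $\Const{B}\ge 1$).

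The main obstacle is the bookkeeping in the combinatorial step. We must check that the factor $|\nnu-\mm|$ from Lemma~\ref{lem:b_s} together with the subset count reproduces precisely $\binom{n}{i}(n-i)$, and that the $\varepsilon$-powers line up, so that each differentiation costs exactly one factor $\Const{p}\Const{B}/\varepsilon$. A minor point to justify is the differentiability of $u_h^s$ in $\yy$; it follows from the finite-dimensional linear system with an invertible matrix that depends smoothly on $\yy$.
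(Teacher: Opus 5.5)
Your argument is the paper's own. You differentiate \eqref{trunc_FEM_main}, test with $\partial^{\nnu}u_h^s\in V_h$, drop the convective term using $\nabla\cdot\bbb^s=0$, bound the rest with Lemma~\ref{lem:b_s} and Poincar\'e, and close with Lemma~\ref{lem:seq-Lambda} at $\alpha=\tfrac12$. Your normalization, without the extra factor $\Const{B}$ in the base case, is slightly cleaner, and your remark on differentiability in $\yy$ is a useful addition. However, the step you defer as bookkeeping, that ``the $\varepsilon$-powers line up'', does not hold in general. With $K:=\Const{p}\Const{B}/\varepsilon$ and $n=|\nnu|$, inserting your ansatz into the recursion gives
\[
\|\nabla\partial^{\nnu}u_h^s\|_{L^2(D)}\le\frac{\Const{p}}{\varepsilon}\Big(\prod_{j}g_j^{\nu_j}\Big)\|f\|_{L^2(D)}\sum_{i=0}^{n-1}\binom{n}{i}(n-i)\,\Lambda_i\,K^{i+1}.
\]
The term with $|\mm|=i$ therefore carries $K^{i+1}$, and only the terms with $i=n-1$ already have the power $K^{n}$. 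To replace every $K^{i+1}$ by $K^{n}$ and reach $\Lambda_nK^n$, you need $K\ge1$, i.e.\ $\varepsilon\le\Const{p}\Const{B}$. The fact that $\Const{B}\ge2$ does not supply this, since $\Const{p}/\varepsilon$ may be small.

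The paper's proof makes exactly the same tacit step, passing from $(\Const{p}\Const{B}/\varepsilon)^{i+1}$ to $(\Const{p}\Const{B}/\varepsilon)^{|\nnu|}$. This is harmless in the intended regime $\varepsilon\ll1$, but it is not cosmetic. As $\varepsilon\to\infty$, the source term $\partial_{y_1}\partial_{y_2}\bbb^s\cdot\nabla u_h^s$ generically makes $\|\nabla\partial_{y_1}\partial_{y_2}u_h^s\|_{L^2(D)}$ behave like $c\,\varepsilon^{-2}$ with $c>0$. The right-hand side of \eqref{reg_main}, by contrast, is $O(\varepsilon^{-3})$. You have two fixes. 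You can add the hypothesis $\varepsilon\le\Const{p}\Const{B}$ explicitly, after which your induction closes exactly as written. Alternatively, run the induction with $\max\{K,1\}$ in place of $K$ in the ansatz. This gives $\|\nabla\partial^{\nnu}u_h^s\|_{L^2(D)}\le |\nnu|!\,(2\max\{K,1\})^{|\nnu|}(\Const{p}/\varepsilon)\prod_j g_j^{\nu_j}\|f\|_{L^2(D)}$ for all $\varepsilon>0$, and it implies \eqref{reg_main} when $K\ge1$.
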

\begin{proof}
For arbitrary $s\in\mathbb{N}$, $h>0$, and $\yy\in U$, it suffices to prove
\begin{equation}\label{step_1}
    \|\partial^{\nnu}\nabla u^s_h(\cdot, \yy)\|_{L^2(D)}\leq \Const{B}\Lambda_{|\nnu|}\bigg(\prod_{j=1}^sg_j^{\nu_j}\bigg) \left(\frac{\Const{p} \Const{B}}{\varepsilon}\right)^{|\nnu|} \|\nabla u^s_h(\cdot, \yy)\|_{L^2(D)},
\end{equation}
where $(\Lambda_n)_{n\in\mathbb{N}_0}$ is defined in Lemma \ref{lem:seq-Lambda}. Then the desired result follows from \eqref{trunc_est_1} and Lemma \ref{lem:seq-Lambda} with $\alpha=\frac{1}{2}$. We prove \eqref{step_1} by induction on $|\nnu|$.

The case $|\nnu|=0$ holds trivially since $\Const{B}\geq1$. Assume \eqref{step_1} holds for all multi-indices with $|\nnu|\leq n-1$. Taking the derivative $\partial^{\nnu}$ with $|\nnu|=n$ on both sides of \eqref{trunc_FEM_main} and applying the Leibniz rule yields
\begin{align*}
    0&=\varepsilon\int_{D}\partial^{\nnu}\nabla u^s_h(\xx,\yy)\cdot\nabla v_h(\xx)\,\mathrm{d}\xx+\int_D\partial^{\nnu}\left(\bb^s(\xx,\yy)\cdot\nabla u^s_h(\xx,\yy)\right)v_h(\xx)\,\mathrm{d}\xx\\
    &=\varepsilon\int_D\partial^{\nnu}\nabla u^s_h(\xx,\yy)\cdot\nabla v_h(\xx)\,\mathrm{d}\xx\\
    &\quad+\sum_{\mm\preceq\nnu}\binom{\nnu}{\mm}\int_D\partial^{\nnu-\mm}\bb^s(\xx,\yy)\cdot \partial^{\mm}\nabla u^s_h(\xx,\yy)v_h(\xx)\,\mathrm{d}\xx,
\end{align*}  
where $\mm\preceq\nnu$ means $m_j\leq \nu_j$ for $j=1,\dots,s$, and $\binom{\nnu}{\mm} \coloneqq \prod_{j=1}^s\binom{\nu_j}{m_j}$.

Setting $v_h\coloneqq\partial^{\nnu}u^s_h(\cdot,\yy)\in V_h$ and separating the term $\mm=\nnu$, we obtain
\begin{align*}
    \varepsilon\|&\partial^{\nnu}\nabla u^s_h(\cdot, \yy)\|^2_{L^2(D)}+\int_D\bb^s(\xx,\yy)\cdot\partial^{\nnu}\nabla u^s_h(\xx,\yy)\partial^{\nnu}u^s_h(\xx,\yy)\,\mathrm{d}\xx\\
    &\leq\bigg|\sum_{{\mm\prec\nnu}}\binom{\nnu}{\mm}\int_D\partial^{\nnu-\mm}\bb^s(\xx,\yy)\cdot\partial^{\mm}\nabla u^s_h(\xx,\yy)\partial^{\nnu}u^s_h(\xx,\yy)\,\mathrm{d}\xx\bigg|\\
    &\leq\sum_{{\mm\prec\nnu}}\binom{\nnu}{\mm}\|\partial^{\nnu-\mm}(\bb^s(\cdot,\yy))\|_{C(\overline{D})}\|\partial^{\mm}\nabla u^s_h(\cdot, \yy)\|_{L^2(D)}\|\partial^{\nnu}u^s_h(\cdot, \yy)\|_{L^2(D)}\\
    &\leq \Const{p}\sum_{{\mm\prec\nnu}}\binom{\nnu}{\mm}\|\partial^{\nnu-\mm}(\bb^s(\cdot,\yy))\|_{C(\overline{D})}\|\partial^{\mm}\nabla u^s_h(\cdot, \yy)\|_{L^2(D)}\|\partial^{\nnu}\nabla u^s_h(\cdot, \yy)\|_{L^2(D)},
\end{align*}
where $\mm\prec\nnu$ means $\mm\preceq\nnu$ and $\mm\neq \nnu$, and the last inequality uses Poincar\'e's inequality.

Since $\bb^s(\cdot,\cdot)$ is divergence-free, the second term on the left-hand side vanishes. Thus,
\begin{align*}
\varepsilon\|\partial^{\nnu}\nabla u^s_h(\cdot, \yy)\|_{L^2(D)}
\leq \Const{p}\sum_{{\mm\prec\nnu }}\binom{\nnu}{\mm}
 \|\partial^{\nnu-\mm}\bb^s(\cdot,\yy)\|_{C(\overline{D})}
\|\partial^{\mm}\nabla u^s_h(\cdot, \yy)\|_{L^2(D)}.
\end{align*}

Applying Lemma \ref{lem:b_s} and the induction hypothesis gives
\begin{align*}
\|\partial^{\nnu}\nabla u^s_h(\cdot, \yy)\|_{L^2(D)}
&\leq \frac{\Const{p}}{\varepsilon}\sum_{{\mm\prec\nnu}}\binom{\nnu}{\mm}|\nnu - \boldsymbol{m}|\Lambda_{|\boldsymbol{m}|}\bigg(\prod_{j=1}^sg_j^{\nu_j}\bigg)\left(\frac{\Const{p} \Const{B}}{\varepsilon}\right)^{|\boldsymbol{m}|}\Const{B}^2\|\nabla u^s_h(\cdot,\yy)\|_{L^2(D)}\\
&\leq \Const{B}\sum^{n-1}_{i=0}\binom{n}{i}(n-i)\Lambda_i\bigg(\prod_{j=1}^sg_j^{\nu_j}\bigg)\left(\frac{\Const{p} \Const{B}}{\varepsilon}\right)^{i+1}\|\nabla u^s_h(\cdot,\yy)\|_{L^2(D)}\\
&\leq \Const{B}\Lambda_{|\nnu|}\bigg(\prod_{j=1}^sg_j^{\nu_j}\bigg)\left(\frac{\Const{p} \Const{B}}{\varepsilon}\right)^{|\nnu|}\|\nabla u^s_h(\cdot,\yy)\|_{L^2(D)},
\end{align*}
where we used the identity $\sum_{\substack{\mm\preceq\nnu \\ |\boldsymbol{m}|=i}}\binom{\nnu}{\mm}=\binom{|\nnu|}{i}$, which follows from a counting argument. This completes the induction and the proof.
\end{proof}

\subsection{Error estimate for the QMC integration}
We now show that for each $s\in\mathbb{N}$ and an appropriate choice of weights $\gamma_{\nnu}$, the integrand belongs to the weighted Sobolev space with a controlled norm.

\begin{theorem}\label{mse_1}
Let Assumptions \ref{ass_2} and \ref{ass_3} hold, $f\in L^2(D)$, $\mathcal{G}\in V^*$, and $g_j$ be defined as in \eqref{eq:defi-gj-bargj}. For given $s\in\mathbb{N}$ and $h>0$, let $u^s_h$ be the finite element approximation of the truncated problem \eqref{trunc_FEM_main}. Then for $N\in\mathbb{N}$ and weights $\boldsymbol{\gamma}=(\gamma_{\nnu})$, a randomly-shifted lattice rule with $N$ quadrature points can be constructed via a component-by-component algorithm such that for any $\lambda\in(1/2,1]$, the root-mean-square error satisfies
\[
\sqrt{\mathbb{E}\big[|I_s(\mathcal{G}(u^s_h))-Q_{s,N}(\cdot;\mathcal{G}(u^s_h))|^2\big]}\leq\frac{C_{\gamma}(\lambda)}{[\varphi(N)]^{1/(2\lambda)}}\|f\|_{L^2(D)}\|\mathcal{G}\|_{V^*},
\]
where $\mathbb{E}[\cdot]$ denotes expectation with respect to the random shift uniformly distributed over $[0,1]^s$, and
\begin{equation}\label{inf_const}
C_{\gamma}(\lambda)\coloneqq\left(\sum_{|\nnu|<\infty}\gamma^{\lambda}_{\nnu}[\rho(\lambda)]^{|\nnu|}\right)^{1/(2\lambda)}\left(\sum_{|\nnu|<\infty}\frac{(|\nnu|!)^2(2\Const{p} \Const{B}/\varepsilon)^{2|\nnu|+2}\prod_{j\in\nnu}g_j^2}{\gamma_{\nnu}}\right)^{1/2}
\end{equation}
with
\[
\rho(\lambda)\coloneqq\frac{2\zeta(2\lambda)}{(2\pi^2)^\lambda}.
\]
\end{theorem}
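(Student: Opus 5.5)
The plan is to combine the abstract QMC bound of Theorem~\ref{mse_0} with the parametric regularity estimate of Theorem~\ref{reg_main_thm}, applied to $F(\yy)\coloneqq\mathcal{G}(u^s_h(\cdot,\yy))$. Theorem~\ref{mse_0} reduces everything to bounding $\|F\|_{\mathcal{W}_{s,\gamma}}$. The first factor in \eqref{inf_const} is then the sum over $\emptyset\neq\nnu\subseteq\{1:s\}$ of $\gamma_\nnu^\lambda\rho(\lambda)^{|\nnu|}$, which we enlarge to a sum over all finite $\nnu\subset\mathbb{N}$. This makes the constant independent of $s$; the term $\nnu=\emptyset$ only increases it.

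First we justify that $F$ has the required mixed derivatives and compute them. For fixed $h$, \eqref{trunc_FEM_main} is a finite linear system whose matrix depends smoothly on $\yy$ (the entries are linear in $\bbb^s$, which is smooth in $\yy$). The matrix is invertible by coercivity, so $u^s_h$ is $C^\infty$ in $\yy$ with values in $V_h$. Since $\mathcal{G}$ is linear and bounded, $\partial^\nnu F(\yy)=\mathcal{G}(\partial^\nnu u^s_h(\cdot,\yy))$. Hence
\[
|\partial^\nnu F(\yy)|\le\|\mathcal{G}\|_{V^*}\|\nabla\partial^\nnu u^s_h(\cdot,\yy)\|_{L^2(D)}\le\|\mathcal{G}\|_{V^*}\,|\nnu|!\Big(\frac{2\Const{p}\Const{B}}{\varepsilon}\Big)^{|\nnu|+1}\prod_{j\in\nnu}g_j\,\|f\|_{L^2(D)},
\]
where the second inequality is Theorem~\ref{reg_main_thm}. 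Here we identify $\nnu\subseteq\{1:s\}$ with its indicator multi-index, and use the $H^1_0$-seminorm as the norm on $V$.

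For $\nnu=\emptyset$ we use \eqref{trunc_est_1} instead. This gives $|F(\yy)|\le\|\mathcal{G}\|_{V^*}(\Const{p}/\varepsilon)\|f\|_{L^2(D)}$, which is dominated by the same formula with $|\nnu|=0$ because $\Const{B}\ge1$.

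Next we insert these pointwise bounds into \eqref{w_sobolev}. The inner integral over the inactive variables runs over a cube of unit volume, so its absolute value is at most $\sup_\yy|\partial^\nnu F|$. Squaring and integrating over the active variables, again over a unit-volume cube, gives
\[
\|F\|_{\mathcal{W}_{s,\gamma}}^2\le\|f\|_{L^2(D)}^2\|\mathcal{G}\|_{V^*}^2\sum_{\nnu\subseteq\{1:s\}}\frac{(|\nnu|!)^2(2\Const{p}\Const{B}/\varepsilon)^{2|\nnu|+2}\prod_{j\in\nnu}g_j^2}{\gamma_\nnu}.
\]
We then enlarge this sum to all finite $\nnu\subset\mathbb{N}$, which yields exactly the second factor of $C_\gamma(\lambda)$. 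Substituting into Theorem~\ref{mse_0} concludes the proof. The translation from $[-\frac12,\frac12]^s$ to $[0,1]^s$ is harmless, as already noted in the paper.

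There is no serious obstacle, since the analytic work is done in Theorem~\ref{reg_main_thm}. The only points needing care are the differentiability of $\yy\mapsto u^s_h$, so that the weighted norm is defined, and the exchange of $\partial^\nnu$ with $\mathcal{G}$. The finite dimensionality of $V_h$ handles both. Note that no particular choice of weights is needed at this stage; the infinite sums in $C_\gamma(\lambda)$ may be infinite for a poor choice of $\boldsymbol{\gamma}$, in which case the bound is trivially true. Choosing POD weights so that $C_\gamma(\lambda)<\infty$ uniformly in $s$ is left to a subsequent result.
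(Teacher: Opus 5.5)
Your proposal is correct and follows the paper's proof: bound $\|\mathcal{G}(u^s_h)\|_{\mathcal{W}_{s,\gamma}}$ using Theorem~\ref{reg_main_thm}, insert this into Theorem~\ref{mse_0}, and extend both sums to all finite $\nnu$. Your additional care fills in details the paper leaves implicit, namely the smoothness of $\yy\mapsto u^s_h$ via the finite-dimensional system and the separate treatment of $\nnu=\emptyset$ via \eqref{trunc_est_1} and $\Const{B}\ge 1$, which is needed because Theorem~\ref{reg_main_thm} excludes $\nnu=\boldsymbol{0}$.
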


\begin{proof}
From Theorem \ref{reg_main_thm}, the definition of the weighted Sobolev space \eqref{w_sobolev}, and the linearity of $\mathcal{G}$, we have
\[
\|\mathcal{G}(u^s_h)\|_{\mathcal{W}_{s,\gamma}}\leq\|f\|_{L^2(D)}\|\mathcal{G}\|_{V^*}\left(\sum_{\nnu\subseteq\{1:s\}}\frac{(|\nnu|!)^2(2\Const{p} \Const{B}/\varepsilon)^{2|\nnu|+2}\prod_{j\in\nnu}g_j^2}{\gamma_{\nnu}}\right)^{1/2}.
\]
Applying Theorem \ref{mse_0} and extending the sum over all finite sets $\nnu$ (thereby removing dependence on $s$) yields the desired result.
\end{proof}

To ensure convergence of the infinite sums in \eqref{inf_const}, we choose weights $\gamma$ that minimize $C_{\gamma}(\lambda)$. We first state two auxiliary lemmas, whose proofs can be found in \cite{KSS2012, main_ref}.

\begin{lemma}\label{aux_lemma_1}
For $n\in\mathbb{N}$, $\lambda>0$, and $A_i, B_i>0$ ($i=1,\dots,n$), the quantity
\[
g(\gamma_1,\dots,\gamma_n)=\bigg(\sum^n_{i=1}\gamma_i^{\lambda}A_i\bigg)^{1/\lambda}\bigg(\sum^n_{i=1}\frac{B_i}{\gamma_i}\bigg)
\]
is minimized by
\[
\gamma_i=c\bigg(\frac{B_i}{A_i}\bigg)^{1/(1+\lambda)}\quad\text{for arbitrary }c>0.
\]
\end{lemma}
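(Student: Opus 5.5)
The plan is to first minimize over the product $\prod_{i}$ part via a Lagrange-multiplier or Hölder argument. The claim is that $g$ is homogeneous of degree zero in $(\gamma_1,\dots,\gamma_n)$, since scaling all $\gamma_i$ by $t>0$ multiplies the first factor by $t$ and the second by $t^{-1}$. This explains the arbitrary constant $c$, and it lets us restrict attention to the lower bound.

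The key step is to show, via Hölder's inequality, that for every admissible $\gamma$ we have
\[
\sum_{i=1}^n (A_i B_i^{\lambda})^{1/(1+\lambda)} \le \Big(\sum_{i=1}^n \gamma_i^{\lambda}A_i\Big)^{1/(1+\lambda)}\Big(\sum_{i=1}^n \frac{B_i}{\gamma_i}\Big)^{\lambda/(1+\lambda)} .
\]
To obtain this, write
\[
(A_iB_i^\lambda)^{1/(1+\lambda)}=(\gamma_i^\lambda A_i)^{1/(1+\lambda)}\,(B_i/\gamma_i)^{\lambda/(1+\lambda)}
\]
and apply Hölder with exponents $1+\lambda$ and $(1+\lambda)/\lambda$, whose reciprocals sum to one. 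Raising the inequality to the power $(1+\lambda)/\lambda$ gives
\[
g(\gamma)\ge \Big(\sum_i (A_iB_i^\lambda)^{1/(1+\lambda)}\Big)^{(1+\lambda)/\lambda},
\]
a lower bound independent of $\gamma$.

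To identify the minimizers, recall that equality in Hölder holds exactly when $\gamma_i^\lambda A_i$ is proportional to $B_i/\gamma_i$. This means $\gamma_i^{1+\lambda}\propto B_i/A_i$, that is, $\gamma_i=c(B_i/A_i)^{1/(1+\lambda)}$. As a check, substituting this choice directly gives $\gamma_i^\lambda A_i = c^\lambda (A_iB_i^\lambda)^{1/(1+\lambda)}$ and $B_i/\gamma_i=c^{-1}(A_iB_i^\lambda)^{1/(1+\lambda)}$. With $S:=\sum_i (A_iB_i^\lambda)^{1/(1+\lambda)}$ this yields $g=(c^\lambda S)^{1/\lambda}c^{-1}S=S^{(1+\lambda)/\lambda}$, so the lower bound is attained.

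The main obstacle is choosing the Hölder splitting exponents correctly. If that fails, a fallback is to set $\partial g/\partial\gamma_i=0$ and use homogeneity together with the strict positivity of $A_i,B_i$ to rule out boundary minima as $\gamma_i\to0$ or $\gamma_i\to\infty$.
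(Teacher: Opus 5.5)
Your argument is correct and complete. The splitting $(A_iB_i^{\lambda})^{1/(1+\lambda)}=(\gamma_i^{\lambda}A_i)^{1/(1+\lambda)}(B_i/\gamma_i)^{\lambda/(1+\lambda)}$, with conjugate exponents $1+\lambda$ and $(1+\lambda)/\lambda$, gives the $\gamma$-independent bound $g(\gamma)\ge S^{(1+\lambda)/\lambda}$, where $S=\sum_i(A_iB_i^{\lambda})^{1/(1+\lambda)}$. The equality case of H\"older then shows that the minimizers are exactly the ray $\gamma_i=c(B_i/A_i)^{1/(1+\lambda)}$, $c>0$.

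The paper gives no proof of its own and defers to \cite{KSS2012,main_ref}. The argument usually given there is, as far as I recall, a stationarity computation: set $\partial g/\partial\gamma_i=0$ and read off $\gamma_i^{1+\lambda}\propto B_i/A_i$. On its own, that route only identifies the critical ray and does not certify a global minimum; one has to add something like uniqueness of the critical point plus coercivity modulo scaling. Your H\"older route gives global minimality, the full set of minimizers, and the minimal value $S^{(1+\lambda)/\lambda}$ in one step.

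That explicit minimal value is exactly what underlies the identity $C_{\gamma}(\lambda)=K_\lambda^{1/(2\lambda)+1/2}$ in the proof of Theorem~\ref{error_est_main}. Since H\"older holds verbatim for countable sums of positive terms, your version would also let one skip the truncation to $|\nnu|<n$ in that proof.

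Cosmetically, your opening sentence about minimizing ``the product part'' is garbled, and the fallback paragraph is unnecessary. The homogeneity remark only explains the free constant $c$ and plays no role in the H\"older argument.
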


\begin{lemma}\label{aux_lemma_2}
For any $A_j>0$ with $\sum_{j\geq1}A_j<1$ and any nonnegative integer $n$,
\[
\sum_{|\nnu|<\infty}(|\nnu|+n)!\prod_{j\in\nnu}A_j\leq\sum^{\infty}_{k=0}\frac{(k+n)!}{k!}\bigg(\sum_{j\geq1}A_j\bigg)^k=n!\bigg(\frac{1}{1-\sum_{j\geq1}A_j}\bigg)^{n+1}.
\]
Moreover, for any $B_j>0$ with $\sum_{j\geq1}B_j<\infty$,
\[
\sum_{|\nnu|<\infty}\prod_{j\in\nnu}B_j=\prod_{j\geq1}(1+B_j)=\exp\bigg(\sum_{j\geq1}\log(1+B_j)\bigg)\leq\exp\bigg(\sum_{j\geq1}B_j\bigg).
\]
\end{lemma}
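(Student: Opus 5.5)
Lemma~\ref{aux_lemma_2} consists of two combinatorial bounds over finite subsets $\nnu\subset\mathbb{N}$. I would prove each by grouping the subsets according to their cardinality.

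\emph{First inequality.} Write the sum as
\[
\sum_{k\ge0}(k+n)!\sum_{|\nnu|=k}\prod_{j\in\nnu}A_j .
\]
Expanding $\big(\sum_{j\ge1}A_j\big)^k$ multinomially, every product $\prod_{j\in\nnu}A_j$ with $\nnu$ a set of $k$ distinct indices occurs exactly $k!$ times (once for each ordering). All remaining terms are nonnegative, so
\[
\sum_{|\nnu|=k}\prod_{j\in\nnu}A_j\le \frac{1}{k!}\Big(\sum_{j\ge1}A_j\Big)^k .
\]
This gives the inequality. For the equality, set $x=\sum_j A_j<1$. Then
\[
\sum_{k\ge0}\frac{(k+n)!}{k!}x^k=n!\sum_{k\ge0}\binom{k+n}{n}x^k=\frac{n!}{(1-x)^{n+1}},
\]
which is the standard negative binomial series, equivalently the $n$-th derivative of $1/(1-x)$. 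All series have nonnegative terms, so the rearrangements are justified by Tonelli.

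\emph{Second identity.} Let $S_J=\sum_{\nnu\subseteq\{1:J\}}\prod_{j\in\nnu}B_j$. Then $S_J=\prod_{j=1}^J(1+B_j)$ by expanding the finite product. The partial sums $S_J$ increase to the full sum over finite $\nnu$, since every finite set lies in some $\{1:J\}$ and all terms are positive. The infinite product converges because $\sum_j\log(1+B_j)\le\sum_j B_j<\infty$, using $\log(1+t)\le t$. Passing to the limit gives both the equality and the final bound.

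No step is a real obstacle. The only point needing care is justifying the counting factor $1/k!$ and the interchange of infinite sums, and nonnegativity of all terms handles both.
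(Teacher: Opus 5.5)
Your proof is correct and follows the standard argument. The paper gives no proof of its own and defers to \cite{KSS2012, main_ref}, where the bound is obtained in the same way: group finite sets by cardinality, use that each product over a $k$-element set appears $k!$ times in $\big(\sum_{j\ge1}A_j\big)^k$, and sum $\sum_{k\ge0}\binom{k+n}{n}x^k=(1-x)^{-(n+1)}$. Likewise, your treatment of the second identity via $\prod_{j=1}^J(1+B_j)$, monotone limits and $\log(1+t)\le t$ matches the usual argument, and nonnegativity of the terms justifies all rearrangements.
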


We now present the main result of the paper. Motivated by the strategy in \cite{KSS2012,main_ref}, for given $\lambda\in(1/2,1]$, we choose weight parameters $\gamma_{\nnu}$ that minimize $C_{\boldsymbol{\gamma}}(\lambda)$ in \eqref{inf_const}, provided the minimum is finite. Note that $C_{\boldsymbol{\gamma}}(\lambda)$ has a form similar to the quantity in Lemma \ref{aux_lemma_1}, allowing us to determine an appropriate form for $\gamma_{\nnu}$. In the following theorem, we specify $\lambda>0$ so that $C_{\boldsymbol{\gamma}}(\lambda)$ is finite and a good convergence rate is obtained.

\begin{theorem}\label{error_est_main}
Let Assumptions \ref{ass_2} and \ref{ass_3} hold. When $p=1$ in Assumption \ref{ass_2}, assume additionally that
\begin{equation}\label{m_2}
\sum_{j\geq1}g_j<\frac{\varepsilon\sqrt{6}}{2\Const{p} \Const{B}}.
\end{equation}
For any $\lambda\in(1/2,1]$, if we choose the weight parameters as
\begin{equation}\label{quan_1}
\gamma_{\nnu}=\gamma_{\nnu}^*(\lambda)\coloneqq\bigg(|\nnu|!\bigg(\frac{2\Const{p} \Const{B}}{\varepsilon}\bigg)^{|\nnu|+1}\prod_{j\in\nnu}\frac{g_j}{\sqrt{\rho(\lambda)}}\bigg)^{2/(1+\lambda)},
\end{equation}
then $C_{\gamma}(\lambda)$ in \eqref{inf_const} is minimized whenever the minimum is finite.

Furthermore, for $p\in(0,1]$ in Assumption \ref{ass_2}, if we set
\begin{equation}\label{final_lam}
\lambda=\lambda_p\coloneqq
\begin{cases}
\displaystyle\frac{1}{2-2\delta}, &\text{if } p \in (0, 2/3],\\[8pt]
\displaystyle\frac{p}{2-p}, &\text{if } p \in (2/3, 1),\\[8pt]
1, &\text{if } p =1,
\end{cases}
\end{equation}
for arbitrarily small $\delta\in(0,1/2]$, and take $\gamma_{\nnu}=\gamma_{\nnu}^*(\lambda_p)$, then $C_{\boldsymbol{\gamma}}(\lambda)<\infty$. Consequently, a randomly-shifted lattice rule can be constructed via a component-by-component algorithm such that
\[
\sqrt{\mathbb{E}\big[|I_s(\mathcal{G}(u^s_h))-Q_{s,N}(\cdot;\mathcal{G}(u^s_h))|^2\big]}\lesssim 
\begin{cases}
N^{-(1-\delta)}, &\text{if } p \in (0, 2/3],\\[4pt]
N^{-(1/p-1/2)}, &\text{if } p \in (2/3, 1),\\[4pt]
N^{-\frac{1}{2}}, &\text{if } p =1,
\end{cases}
\]
where the implicit constants are independent of $h>0$ and $s\in\mathbb{N}$, but may depend on $p\in(0,1]$ and $\delta\in(0,1/2]$ when applicable.
\end{theorem}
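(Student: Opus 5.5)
The plan follows the KSS strategy: apply Lemma~\ref{aux_lemma_1} to $C_{\gamma}(\lambda)$, substitute the optimal weights, and bound the resulting sums with Lemma~\ref{aux_lemma_2}.

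\textbf{Minimality of the weights.} Write $C_\gamma(\lambda)^{2\lambda}\cdot(\text{second sum})^{-\lambda}$ in the form of Lemma~\ref{aux_lemma_1}, with index set the finite subsets $\nnu$ and
\[
A_{\nnu}=\rho(\lambda)^{|\nnu|},\qquad B_{\nnu}=(|\nnu|!)^2(2\Const{p}\Const{B}/\varepsilon)^{2|\nnu|+2}\prod_{j\in\nnu}g_j^2 .
\]
Then $C_\gamma(\lambda)^2=g(\gamma)$, so the minimizer is $\gamma_{\nnu}=(B_{\nnu}/A_{\nnu})^{1/(1+\lambda)}$, which is exactly \eqref{quan_1} with $c=1$. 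Lemma~\ref{aux_lemma_1} is stated for finitely many indices, so I would apply it to sums over subsets of $\{1:s\}$ and pass to the limit, or simply note that we only need the weights as a choice.

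\textbf{Reduction to one sum.} Substituting $\gamma^*$, both factors become powers of the single sum
\[
S_\lambda=\sum_{|\nnu|<\infty}\Big((|\nnu|!)^2 K^{2|\nnu|+2}\prod_{j\in\nnu}\rho(\lambda)^{1/\lambda}g_j^2\Big)^{\lambda/(1+\lambda)},\qquad K=2\Const{p}\Const{B}/\varepsilon .
\]
Hence $C_\gamma(\lambda)=S_\lambda^{(1+\lambda)/(2\lambda)}$, and it suffices to prove $S_\lambda<\infty$. Set $a=2\lambda/(1+\lambda)\in(2/3,1]$ and $\beta_j=K^{a}\rho(\lambda)^{1/(1+\lambda)}g_j^{a}$. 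Then
\[
S_\lambda=K^{a}\sum_{\nnu}(|\nnu|!)^{a}\prod_{j\in\nnu}\beta_j .
\]

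\textbf{Case $a<1$.} This is the main obstacle: $(|\nnu|!)^a$ grows superexponentially, so it must be absorbed by the $p$-summability. I would pick $p<q<1$ (or $q=p$ when possible) and use Hölder on the product, or more cleanly the standard trick $\prod\beta_j=\prod(\beta_j/\alpha_j)\alpha_j$ with $\alpha_j$ summable. I expect it to suffice to show $a\geq p$ so that $\sum_j g_j^{a}<\infty$, and then use the KSS argument: order $\beta_j$ decreasingly and bound
\[
\sum_{|\nnu|=n}\prod_{j\in\nnu}\beta_j\le\frac{1}{n!}\Big(\sum_j\beta_j\Big)^n ,
\]
which gives $S_\lambda\le K^a\sum_n (n!)^{a-1}(\sum_j\beta_j)^n<\infty$ since $a<1$ (the ratio test works for any finite $\sum\beta_j$). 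Now $a\ge p$ reads $\lambda\ge p/(2-p)$. This is exactly the choice $\lambda_p=p/(2-p)$ for $p\in(2/3,1)$. For $p\le 2/3$ we need $\lambda>1/2$; taking $\lambda=1/(2-2\delta)$ gives $a>2/3\ge p$, and $\sum g_j^a\le(\max g_j)^{a-p}\sum g_j^p<\infty$.

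\textbf{Case $p=1$, $\lambda=1$, $a=1$.} Here the factorial does not decay, and Lemma~\ref{aux_lemma_2} with $n=0$ requires $\sum_j\beta_j<1$. Since $\rho(1)=2\zeta(2)/(2\pi^2)=1/6$, we get $\beta_j=K g_j/\sqrt6$, and the condition becomes $\sum_j g_j<\varepsilon\sqrt6/(2\Const{p}\Const{B})$, which is \eqref{m_2}.

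Finally, plug $C_\gamma(\lambda_p)<\infty$ into Theorem~\ref{mse_0} via Theorem~\ref{mse_1}, using $\varphi(N)^{-1}\lesssim N^{-1}$. The rate is $N^{-1/(2\lambda_p)}$, which equals $N^{-(1-\delta)}$, $N^{-(1/p-1/2)}$, and $N^{-1/2}$ in the three cases respectively. The constants are independent of $s$ and $h$ because the sums run over all finite $\nnu$.
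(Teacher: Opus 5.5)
Your proof is correct. It matches the paper in the minimization step, in the reduction $C_{\gamma}(\lambda)=S_\lambda^{(1+\lambda)/(2\lambda)}$ (your $S_\lambda$ is the paper's $K_\lambda$), and in the case $p=1$. There both arguments use Lemma~\ref{aux_lemma_2} with $n=0$ and $\rho(1)=1/6$ to reach the smallness condition on $\sum_{j\geq1}g_j$.

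The routes differ for $\lambda<1$.
\begin{itemize}
\item The paper multiplies and divides by $\prod_{j\in\nnu}\kappa_j^{2\lambda/(1+\lambda)}$. It then applies H\"older's inequality with exponents $(1+\lambda)/(2\lambda)$ and $(1+\lambda)/(1-\lambda)$ to separate $(|\nnu|!)^{a}$ from the product. The two resulting sums are bounded by the two parts of Lemma~\ref{aux_lemma_2}, and finally $\kappa_j=g_j^p/\theta$ is chosen.
\item You instead group the terms by $|\nnu|=n$ and use the counting bound $\sum_{|\nnu|=n}\prod_{j\in\nnu}\beta_j\le(\sum_{j\geq1}\beta_j)^n/n!$. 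This reduces everything to the scalar series $\sum_{n\geq0}(n!)^{a-1}(\sum_{j\geq1}\beta_j)^n$, which converges by the ratio test because $a<1$.
\end{itemize}

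Your argument is shorter and needs no auxiliary sequence. It also makes transparent that the only requirement is $\sum_{j\geq1}g_j^{2\lambda/(1+\lambda)}<\infty$. In fact, the existence of $\kappa_j$ satisfying the paper's two conditions is equivalent to exactly this summability: H\"older gives one direction, and the choice $\kappa_j\propto g_j^{a}$ gives the other. So both routes produce the same threshold $\lambda\geq p/(2-p)$. What the paper's route buys is a closed-form bound of the type $(1-\sum_j\kappa_j)^{-a}\exp(\cdots)$ for $K_\lambda$, rather than a series.

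One edge case is covered by neither argument. If $\delta=1/2$, then $\lambda_p=1/(2-2\delta)=1$ and $a=1$, so your case $a<1$ does not apply. As in the case $p=1$, finiteness would then require a smallness condition on $\sum_j g_j$. The paper's proof also silently restricts to $\delta\in(0,1/2)$, so you should state that restriction explicitly.
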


\begin{proof}
First, $C_{\gamma}(\lambda)$ is finite if and only if
\begin{equation}\label{quan_2}
\left(\sum_{|\nnu|<n}\gamma^{\lambda}_{\nnu}[\rho(\lambda)]^{|\nnu|}\right)^{1/(2\lambda)}\left(\sum_{|\nnu|<n}\frac{(|\nnu|!)^2(2\Const{p} \Const{B}/\varepsilon)^{2|\nnu|+2}\prod_{j\in\nnu}g_j^2}{\gamma_{\nnu}}\right)^{1/2}
\end{equation}
is bounded uniformly in $n\in\mathbb{N}$. By Lemma \ref{aux_lemma_1}, the expression in \eqref{quan_2} is minimized by choosing $\gamma_{\nnu}$ as in \eqref{quan_1} for all $\nnu$ with $|\nnu|\leq n$. Since this holds for arbitrarily large $n$, we may choose $\gamma_{\nnu}$ according to \eqref{quan_1} for all finite $\nnu$.

Next, we show that $C_{\gamma}(\lambda)<\infty$ when $\gamma_{\nnu}$ is given by \eqref{quan_1} and $\lambda$ by \eqref{final_lam}. Define
\[
K_{\lambda}\coloneqq\sum_{|\nnu|<\infty}\left(\gamma_{\nnu}^*(\lambda)\right)^{\lambda}\left(\rho(\lambda)\right)^{|\nnu|}=\sum_{|\nnu|<\infty}\bigg(|\nnu|!\bigg(\frac{2\Const{p} \Const{B}}{\varepsilon}\bigg)^{|\nnu|+1}\bigg)^{2\lambda/(1+\lambda)}\prod_{j\in\nnu}\left(g_j^{2\lambda}\rho(\lambda)\right)^{1/(1+\lambda)}.
\]
Since $C_{\gamma}(\lambda)=K_{\lambda}^{1/(2\lambda)+1/2}$, it suffices to prove $K_{\lambda}<\infty$. We first consider $0<\lambda<1$, which implies $2\lambda/(1+\lambda)<1$. Multiply and divide each term in $K_{\lambda}$ by $\prod_{j\in\nnu}\kappa_j^{2\lambda/(1+\lambda)}$, where $\kappa_j>0$ will be chosen later. Applying Hölder's inequality with exponents $(1+\lambda)/(2\lambda)$ and $(1+\lambda)/(1-\lambda)$, together with Lemma \ref{aux_lemma_2}, yields
\begin{align*}
K_{\lambda}
&=\bigg(\frac{2\Const{p} \Const{B}}{\varepsilon}\bigg)^{2\lambda/(1+\lambda)}\sum_{|\nnu|<\infty}\big(|\nnu|!\big)^{2\lambda/(1+\lambda)}\prod_{j\in\nnu}\kappa_j^{2\lambda/(1+\lambda)}\prod_{j\in\nnu}\bigg(\frac{(2\Const{p} \Const{B}/\varepsilon)^{2\lambda}g_j^{2\lambda}\rho(\lambda)}{\kappa_j^{2\lambda}}\bigg)^{1/(1+\lambda)}\\
&\leq \frac{2\Const{p} \Const{B}}{\varepsilon}\bigg(\sum_{|\nnu|<\infty}|\nnu|!\prod_{j\in\nnu}\kappa_j\bigg)^{2\lambda/(1+\lambda)}\\
&\quad\times\bigg(\sum_{|\nnu|<\infty}\prod_{j\in\nnu}\bigg(\frac{(2\Const{p} \Const{B}/\varepsilon)^2g_j^{2\lambda}\rho(\lambda)}{\kappa_j^{2\lambda}}\bigg)^{1/(1-\lambda)}\bigg)^{(1-\lambda)/(1+\lambda)}\\
&\leq \bigg(\frac{1}{1-\sum_{j\geq1}\kappa_j}\bigg)^{2\lambda/(1+\lambda)}\exp\!\bigg(\frac{1-\lambda}{1+\lambda}\bigg(\frac{(2\Const{p} \Const{B})^2\rho(\lambda)}{\varepsilon^2}\bigg)^{1/(1-\lambda)}\sum_{j\geq1}\bigg(\frac{g_j}{\kappa_j}\bigg)^{2\lambda/(1-\lambda)}\bigg),
\end{align*}
provided that
\begin{equation}\label{m_1}
\sum_{j\geq1}\kappa_j<1\quad\text{and}\quad\sum_{j\geq1}\bigg(\frac{g_j}{\kappa_j}\bigg)^{2\lambda/(1-\lambda)}<\infty.
\end{equation}
Choose $\kappa_j\coloneqq g_j^p/\theta$ for some $\theta>\sum_{j\geq1}g_j^p$. Then $\sum_{j\geq1}\kappa_j<1$ by Assumption \ref{ass_2}. Moreover, since $\sum_{j\geq1}g_j^q<\infty$ for all $q\geq p$, the second sum in \eqref{m_1} converges provided
\[
\frac{2\lambda}{1-\lambda}(1-p)\geq p \quad\Longleftrightarrow\quad \lambda\geq\frac{p}{2-p}.
\]
For $p\in(0,2/3]$, we have $\frac{1}{2}\geq\frac{p}{2-p}$. Choosing $\lambda_p=1/(2-2\delta)$ with $\delta\in(0,1/2)$ ensures $\frac{p}{2-p}\leq\frac{1}{2}<\lambda_p<1$. For $p\in(2/3,1)$, we have $\frac{1}{2}<\frac{p}{2-p}<1$, so we take $\lambda_p=p/(2-p)$. Finally, for $p=1$, set $\lambda=1$. In this case,
\[
\rho(1)=\frac{2\zeta(2)}{2\pi^2}=\frac{1}{6}.
\]
Using Lemma \ref{aux_lemma_2} and assumption \eqref{m_2}, we obtain
\[
K_1=\frac{2\Const{p} \Const{B}}{\varepsilon}\sum_{|\nnu|<\infty}|\nnu|!\prod_{j\in\nnu}\bigg(\frac{(2\Const{p} \Const{B}/\varepsilon)g_j}{\sqrt{6}}\bigg)\leq\frac{1}{1-\sum_{j\geq1}\frac{2\Const{p} \Const{B} g_j}{\varepsilon\sqrt{6}}}<\infty,
\]
which completes the proof.
\end{proof}
\section{Numerical Experiments}\label{sec:experiments}
\label{sec_exp}

Since Theorem \ref{error_est_main} represents our main novel theoretical contribution, we conduct several numerical tests to complement the analysis. We present results for the singularly perturbed convection--diffusion problem \eqref{eq:original} in the domain $D \times U$, with $D = (0,1)^2$. All computations were performed using MATLAB on an HPC system with 128 cores, each allocated 3GB of memory.

The spatial domain $D$ is discretized using a uniform triangular mesh $\mathcal{T}_h$, obtained by dividing the unit square into $(N_{\mathrm{ref}}-1)^2$ congruent squares with $N_{\mathrm{ref}}=64$ (resulting in mesh width $h=1/(N_{\mathrm{ref}}-1)$), and then bisecting each square into two right triangles. 

The deterministic problem is solved using the continuous piecewise linear finite element method (P1 elements). The finite element space is defined as
\[
V_h \coloneqq \{ v\in C(\overline{D}) : v|_K\in P_1(K)\ \forall K\in\mathcal{T}_h,\ v|_{\partial D}=0 \}.
\]
Linear systems arising from the weak formulation of \eqref{eq:original} are solved using a direct solver.

Uncertainty enters through the random velocity field $\bbb(\xx,\yy)$ constructed from the stream function $\phi(\xx,\yy)=\exp(a(\xx,\yy))$. The random field $a(\xx,\yy)$ is represented by the expansion \eqref{main_rf_exp} with mean $\overline{a}=0$, where the random variables $\{y_j\}_{j\ge1}$ are taken to be i.i.d. uniform on $[-\tfrac12,\tfrac12]$. The expansion is truncated at $s=200$ terms.

The deterministic basis functions are defined as follows. Let integer pairs $(m,n)\in\mathbb{N}^2$ be indexed in non-decreasing order of $m^2+n^2$, mapping the $j$-th pair to $(m(j),n(j))$, and define
\[
\psi_{m,n}(x_1,x_2)
\coloneqq \frac{\sin(m\pi x_1)\sin(n\pi x_2)}{(m^2+n^2)^{(\nu+1)/2}},
\quad (x_1,x_2)\in(0,1)^2,
\]
and set $\psi_j \coloneqq \psi_{m(j),n(j)}$. Since $\|\sin(m\pi\cdot)\|_{C(\overline{D})}\le 1$ and $\|\nabla\sin(m\pi\cdot)\|_{C(\overline{D})}\lesssim m$, we obtain
\[
\|\psi_j\|_{C(\overline{D})}\lesssim (m(j)^2+n(j)^2)^{-(\nu+1)/2},
\qquad
\|\nabla\psi_j\|_{C(\overline{D})}\lesssim (m(j)^2+n(j)^2)^{-\nu/2}.
\]
In two dimensions, $m(j)^2+n(j)^2\sim j$, so the sequence
\[
g_j\coloneqq\max\{\|\psi_j\|_{C(\overline{D})},\|\nabla\psi_j\|_{C(\overline{D})}\}
\]
satisfies $g_j\lesssim j^{-\nu/2}$, which is consistent with the assumptions of our analysis.

We consider numerical tests with varying perturbation parameter $\varepsilon$ and smoothness parameter $\nu$. For $\xx=(x_1,x_2)$, the source term is chosen as
\[
f(\xx)=\frac{2\pi^2}{\varepsilon}\sin(\pi x_1)\sin(\pi x_2).
\]
This choice yields numerical errors of size $\mathcal{O}(10^{-5})$ for $N_{\mathrm{QMC}}=4001$ sampling points, providing a more reliable error indicator compared to $f=1$, where errors may approach machine precision.

We use the component-by-component algorithm to generate the generating vector $\boldsymbol{z}\in \mathbb{Z}^{s}$ for the randomly shifted lattice rule (RSLR). The weighted Sobolev space is constructed with the norm induced by weights $\gamma_{\boldsymbol{\nu}} = \gamma_{\boldsymbol{\nu}}^*(\lambda)$ given in \eqref{quan_1}, using constants $\Const{p}$ and $\Const{B}$ from our analysis and $\rho(\lambda)$ corresponding to the chosen $\lambda$. The sequence $a_j$ is set according to the theoretical construction.

We take $R=32$ independent random shifts $\boldsymbol{\Delta}_r\in[0,1]^s$, $1\le r\le R$. For each shift, the corresponding QMC points are given by
\[
\yy^{(i,r)}
=
\operatorname{frac}\!\left(\frac{i\boldsymbol{z}}{N}
+\boldsymbol{\Delta}_r\right)
-
\left(\tfrac12,\ldots,\tfrac12\right),
\quad i=1,\ldots,N,
\]
where $\boldsymbol{z}\in\mathbb{Z}^s$ is the generating vector and $\operatorname{frac}(\cdot)$ denotes the fractional part applied componentwise. For each fixed spatial mesh and truncation dimension $s$, we compute the finite element approximation $u_h^s(\cdot,\yy^{(i,r)})$ at all QMC sample points.

We consider two bounded linear functionals defined by point evaluation:
\[
\mathcal{G}_1(u)=u(1/3,\,3/4),
\qquad
\mathcal{G}_2(u)=u(1/4,\,2/3).
\]
For each shift, we compute
\[
Q_r \coloneqq \mathcal{Q}_{s,N}(\mathcal{G}(u_h^s);\boldsymbol{\Delta}_r)
= \frac{1}{N}\sum_{i=1}^N \mathcal{G}\big(u_h^s(\cdot,\yy^{(i,r)})\big)\quad{\text{and}}\quad \bar{Q}=\frac{1}{R}\sum_{r=1}^{R}Q_r,
\]
and the standard error, which serves as an unbiased estimator of the root mean square error (RMSE):
\[
e_{\mathrm{QMC}} \coloneqq \left( \frac{1}{R(R-1)} \sum_{r=1}^{R}(Q_r-\bar{Q})^2 \right)^{1/2}.
\]
To obtain more reliable estimates, for each value of $N$ we repeat the experiment ten times, each with a newly generated set of random shifts $\{\boldsymbol{\Delta}_r\}_{r=1}^R$. We report the corresponding standard errors $e_1$ and $e_2$ for the two functionals. Convergence rates and 90\% confidence intervals are estimated via linear regression of $-\log(e_{\mathrm{QMC}})$ against $\log N$ based on these ten repetitions.


\subsection*{Comparison of QMC and MC Methods}
For the first test, we compare the standard errors of the QMC and standard Monte Carlo (MC) methods with perturbation parameter $\varepsilon=0.1$. Through linear regression of $|\log g_j|$ against $\log j$ for sufficiently large $j$, we empirically estimate the summability parameter in Assumption~\ref{ass_2} to be $p \approx 0.4214$, consistent with $\{j^{-2.5}\}\in\ell^p$ for $p>0.4$. 

According to Theorem~\ref{error_est_main}, with $p\in(0,2/3]$ we choose $\lambda=\frac{1}{2-2\delta}$, $\delta=0.1$, yielding $\lambda\approx0.5556$. The corresponding theoretical prediction is a QMC convergence rate of $O(N^{-(1-\delta)})$. 

The numerical results in Table~\ref{Numerical_Table_1} show that the QMC standard errors decay consistently with this prediction, while the MC estimator follows the expected $O(N^{-1/2})$ behavior.

\begin{table}[htp!]
  \centering
  \caption{Comparison of standard errors for QMC and MC methods with $\mathcal{G}_1$ and $\mathcal{G}_2$ for $\nu=5$ and $\varepsilon=0.1$. \label{Numerical_Table_1}}
    \begin{tabular}{c|cc|cc}
    \toprule
    $N$ & \multicolumn{2}{c}{QMC} & \multicolumn{2}{c}{MC} \\
    \cmidrule(lr){2-3} \cmidrule(lr){4-5}
    & $e_1$ & $e_2$ & $e_1$ & $e_2$ \\
    \midrule
    4001   & 1.62e-05 & 1.61e-05 & 8.91e-04 & 8.63e-04 \\
    8009   & 8.24e-06 & 8.44e-06 & 7.21e-04 & 7.24e-04 \\
    16001  & 4.14e-06 & 4.10e-06 & 5.21e-04 & 5.03e-04 \\
    32003  & 2.09e-06 & 2.16e-06 & 3.66e-04 & 3.56e-04 \\
    64007  & 1.08e-06 & 1.05e-06 & 2.45e-04 & 2.39e-04 \\
    120011 & 5.47e-07 & 5.45e-07 & 1.86e-04 & 1.81e-04 \\
    \midrule
    Rate   & 0.99 & 0.99 & 0.48 & 0.48 \\
    90\% interval & [0.97,1.01] & [0.97,1.02] & [0.43,0.52] & [0.43,0.53] \\
    \bottomrule
    \end{tabular}
\end{table}

\subsection*{Effect of Perturbation Parameter $\varepsilon$}
Table~\ref{tab:qmc_eps_compare} examines the influence of the singular perturbation parameter $\varepsilon$ on convergence rates. For $\varepsilon=0.05$, the observed convergence behavior agrees well with the theoretical rate $\mathcal{O}(N^{-(1-\delta)})$. For the more singular case $\varepsilon=0.025$, the empirical rates are slightly reduced but remain comparable to theoretical predictions. This slowdown may relate to the prefactor \eqref{inf_const} in the convergence rate, which increases monotonically with decreasing $\varepsilon$, thereby enlarging the pre-asymptotic region.

\begin{table}[htp!]
\centering
\caption{QMC standard errors for $\nu=5$ with two perturbation parameters $\varepsilon$. \label{tab:qmc_eps_compare}}
\begin{tabular}{c|cc|cc}
\toprule
& \multicolumn{2}{c}{$\varepsilon=0.05$} & \multicolumn{2}{c}{$\varepsilon=0.025$} \\
\cmidrule(lr){2-3} \cmidrule(lr){4-5}
$N$ & $e_1$ & $e_2$ & $e_1$ & $e_2$ \\
\midrule
4001   & 1.38e-04 & 1.41e-04 & 1.47e-03 & 1.51e-03 \\
8009   & 7.28e-05 & 7.66e-05 & 7.40e-04 & 7.93e-04 \\
16001  & 3.49e-05 & 3.52e-05 & 3.91e-04 & 4.06e-04 \\
32003  & 1.90e-05 & 1.97e-05 & 2.05e-04 & 2.15e-04 \\
64007  & 1.02e-05 & 1.09e-05 & 1.07e-04 & 1.18e-04 \\
120011 & 5.27e-06 & 5.54e-06 & 7.15e-05 & 7.55e-05 \\
\midrule
Rate & 0.95 & 0.95 & 0.90 & 0.89 \\
90\% interval & [0.93,0.98] & [0.90,0.99] & [0.85,0.95] & [0.85,0.94] \\
\bottomrule
\end{tabular}
\end{table}

\subsection*{Effect of Smoothness Parameter $\nu$}
Table~\ref{tab:qmc_nu_compare} shows the effect of increasing $\nu$ on QMC convergence rates for $\varepsilon = 0.05$. Both error measures exhibit similar convergence behavior for $\nu = 6$ and $\nu = 7$, with empirical rates remaining close to the theoretical prediction $\mathcal{O}(N^{-(1-\delta)})$ with $\delta = 0.1$.

\begin{table}[htp!]
\centering
\caption{QMC standard errors for $\varepsilon = 0.05$ with different smoothness parameters $\nu$. \label{tab:qmc_nu_compare}}
\begin{tabular}{c|cc|cc}
\toprule
& \multicolumn{2}{c}{$\nu = 6$} & \multicolumn{2}{c}{$\nu = 7$} \\
\cmidrule(lr){2-3} \cmidrule(lr){4-5}
$N$ & $e_1$ & $e_2$ & $e_1$ & $e_2$ \\
\midrule
4001   & 5.84e-05 & 5.87e-05 & 2.58e-05 & 2.51e-05 \\
8009   & 3.05e-05 & 3.13e-05 & 1.30e-05 & 1.32e-05 \\
16001  & 1.46e-05 & 1.46e-05 & 6.32e-06 & 6.27e-06 \\
32003  & 7.69e-06 & 7.79e-06 & 3.37e-06 & 3.44e-06 \\
64007  & 4.00e-06 & 3.89e-06 & 1.72e-06 & 1.66e-06 \\
120011 & 2.02e-06 & 2.03e-06 & 8.53e-07 & 8.56e-07 \\
\midrule
Rate & 0.98 & 0.99 & 0.99 & 0.99 \\
90\% interval & [0.96,1.01] & [0.97,1.01] & [0.97,1.02] & [0.96,1.02] \\
\bottomrule
\end{tabular}
\end{table}

\subsection*{SUPG Formulation for Strong Convection}
For the strongly convection-dominated regime with $\varepsilon = 0.001$, we employ the Streamline Upwind Petrov-Galerkin (SUPG) formulation \cite{Franca1992}. The SUPG discretization of \eqref{eq:original} reads: find $u_h^{\mathrm{SUPG}} \in V_h$ such that for all $v_h \in V_h$,
\[
B_{\mathrm{SUPG}}(u_h^{\mathrm{SUPG}}, v_h) = F_{\mathrm{SUPG}}(v_h),
\]
where
\begin{align*}
B_{\mathrm{SUPG}}(u_h, v_h) &= e(u_h, v_h) + \sum_{T \in \mathcal{T}_h} \tau_T (\bbb \cdot \nabla u_h, \bbb \cdot \nabla v_h)_T, \\
F_{\mathrm{SUPG}}(v_h) &= (f, v_h)_{D} + \sum_{T \in \mathcal{T}_h} \tau_T (f, \bbb \cdot \nabla v_h)_T.
\end{align*}
The stabilization parameter is chosen as
\[
\tau_T = \frac{h}{2 \|\bbb\|_{L^\infty(T)}},
\]
where $\|\bbb\|_{L^\infty(T)} = \max_{\xx \in T} \max(|b_x(\xx)|, |b_y(\xx)|)$, following the simplified formula from \cite{Franca1992} for linear elements in convection-dominated regimes.

Table~\ref{tab:supg_eps001_nu_compare} reports QMC standard errors for $\nu = 10$ and $\nu = 12$ with $\varepsilon = 0.001$ using SUPG. For $\nu = 10$, the observed convergence rate is lower than the theoretical prediction of $0.9$, while for $\nu = 12$, the rates align with $\mathcal{O}(N^{-(1-\delta)})$ with $\delta = 0.1$. This discrepancy can be attributed to the strongly convection-dominated regime: for insufficiently large $\nu$, the random field smoothness is inadequate to overcome pre-asymptotic effects induced by small $\varepsilon$, temporarily reducing convergence rates. When $\nu$ is sufficiently large, increased smoothness dominates and convergence recovers to theoretical expectations. In both cases, SUPG stabilization effectively handles strong convection while preserving favorable QMC convergence properties.

\begin{table}[htp!]
\centering
\caption{QMC standard errors for $\varepsilon = 0.001$ with different $\nu$ using SUPG. \label{tab:supg_eps001_nu_compare}}
\begin{tabular}{c|cc|cc}
\toprule
& \multicolumn{2}{c}{$\nu = 10$} & \multicolumn{2}{c}{$\nu = 12$} \\
\cmidrule(lr){2-3} \cmidrule(lr){4-5}
$N$ & $e_1$ & $e_2$ & $e_1$ & $e_2$ \\
\midrule
4001   & 5.79e-01 & 5.42e-01 & 4.36e-01 & 4.46e-01 \\
8009   & 3.00e-01 & 2.97e-01 & 2.43e-01 & 2.13e-01 \\
16001  & 1.62e-01 & 1.68e-01 & 3.18e-01 & 2.06e-01 \\
32003  & 1.21e-01 & 1.30e-01 & 5.99e-02 & 6.21e-02 \\
64007  & 6.43e-02 & 6.14e-02 & 3.93e-02 & 3.61e-02 \\
120011 & 3.49e-02 & 3.49e-02 & 2.07e-02 & 2.21e-02 \\
\midrule
Rate & 0.79 & 0.78 & 0.93 & 0.90 \\
90\% interval & [0.71,0.87] & [0.69,0.87] & [0.62,1.25] & [0.72,1.08] \\
\bottomrule
\end{tabular}
\end{table}

Table~\ref{Numerical_Table_1} demonstrates that the QMC method achieves a convergence rate close to $\mathcal{O}(N^{-1})$, significantly faster than the MC rate of $\mathcal{O}(N^{-1/2})$, consistent with Theorem~\ref{error_est_main} for $p \in (0,2/3]$. Table~\ref{tab:qmc_eps_compare} shows that the theoretical rate $\mathcal{O}(N^{-(1-\delta)})$ with $\delta = 0.1$ is observed for both $\varepsilon = 0.05$ and $\varepsilon = 0.025$, with slightly reduced rates for the more singular case. Table~\ref{tab:qmc_nu_compare} confirms that increasing $\nu$ from 5 to 7 does not adversely affect convergence rates; indeed, rates improve slightly toward the optimal $\mathcal{O}(N^{-1})$. The SUPG framework demonstrates robust performance even for the strongly convection-dominated case $\varepsilon = 0.001$ (Table~\ref{tab:supg_eps001_nu_compare}). These results confirm that the proposed QMC method, with weights chosen according to our theory, effectively computes expectations of quantities of interest for singularly perturbed convection--diffusion problems with random velocity fields across a range of $\varepsilon$ and smoothness parameters.

\section{Conclusion}\label{sec:conclusion}

In this work, we developed an efficient quasi-Monte Carlo method for singularly perturbed convection--diffusion problems with a solenoidal random velocity field. The velocity field was constructed as the curl of a log-uniform random field, ensuring that it remains divergence-free for every realization, and we proposed a numerical framework that combines a finite element discretization, a truncated Karhunen--Lo\`eve expansion, and a lattice-based QMC integration using randomly shifted lattice rules to approximate the expected value $\mathbb{E}[\mathcal{G}(u)]$ of a linear functional of the solution. We proved that when the covariance kernel is sufficiently regular, the QMC method attains a nearly linear, almost optimal convergence rate with a constant independent of the integration dimension, and that this convergence rate is independent of the singular perturbation parameter $\varepsilon$. The numerical experiments support these theoretical predictions. The QMC method exhibited a convergence rate close to $\mathcal{O}(N^{-1})$, significantly faster than the $\mathcal{O}(N^{-1/2})$ rate of standard Monte Carlo, and delivered stable performance across a range of values of $\varepsilon$ and the smoothness parameter $\nu$. Moreover, in the strongly convection-dominated regime, incorporating the SUPG stabilization technique allowed sharp boundary layers to be handled effectively while preserving the favorable QMC convergence properties. To the best of our knowledge, this is the first study to apply a QMC method to singularly perturbed convection--diffusion problems and to establish its $\varepsilon$-independent convergence both theoretically and numerically, providing a foundation for future research such as extensions to multilevel QMC methods or applications to more general random velocity models.

\section*{Acknowledgements}
The authors thank Dr. Jiefei Yang for pointing out a potential issue with our initial approach, which helped 
complete the proof.

\bibliography{references}
\bibliographystyle{abbrv}


\end{document}